\title{The Lightning Model}
\author{James T. Campbell\footnote{Corresponding Author: jcampbll@memphis.edu. Partially supported by a University of Memphis Hardin Honors College Summer Research Grant} \and Alexandra Deane\footnote{Partially supported by an NSERC-USRA grant} \and Anthony Quas\footnote{Partially supported by an NSERC grant}}
\newcommand{\Addresses}{{% additional braces for segregating \footnotesize
		\bigskip
		\footnotesize
		
		J.T.~Campbell$^*$ (Corresponding author), \textsc{Department of Mathematical Sciences, University of Memphis, Memphis TN 38152
			}\par\nopagebreak
		\textit{E-mail address}, J.T.~Campbell: \texttt{jcampbll\tocat memphis.edu}
		
		\medskip
		
		A.~Deane, \textsc{Department of Mathematics and Statistics, University of Victoria, Victoria, BC V8W 2Y2	Canada 
			}\par\nopagebreak
		\textit{E-mail address}, A.~Deane: \texttt{alexandradeane\tocat uvic.ca}
		
		\medskip
		
		A.~Quas, \textsc{Department of Mathematics and Statistics, University of Victoria, Victoria, BC V8W 2Y2 	
			Canada
			}\par\nopagebreak
		\textit{E-mail address}, A.~Quas: \texttt{aquas\tocat uvic.ca}
		
}}
\newcommand{\ep}{\ensuremath{\epsilon}}
\newcommand{\fep}{f_\ep}
\newcommand{\fepp}{f_{\ep'}}
\newcommand{\epp}{{\ep'}}
\newcommand{\V}{\ensuremath{\mathbb V} }
\newcommand{\E}{\mathbb{E}}
\newcommand{\Z}{\mathbb Z}
\newcommand{\tocat}{\includegraphics[scale=0.0234]{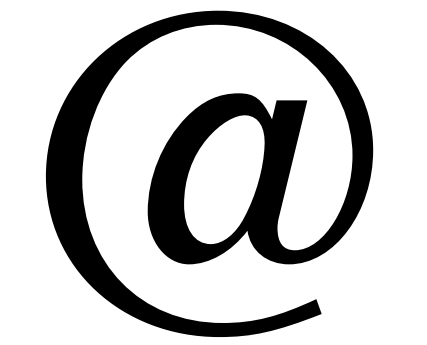}}
\DeclarePairedDelimiter{\ceil}{\lceil}{\rceil}
\DeclarePairedDelimiter{\floor}{\lfloor}{\rfloor}
\newcommand{\N}{\mathbb{N}}
\newcommand{\Pep}{\mathbb{P}_\epsilon}
\newcommand{\gep}{\overset{\epsilon}{>}}
\newcommand{\tep}{\overset{\epsilon}{\to}}
\newcommand{\lrep}{\overset{\epsilon}{\leftrightarrow}}
\newcommand{\Fep}{F^{\epsilon}}
\newcommand{\Lep}{\mathscr{L}^{\epsilon}}
\newcommand{\Fspace}{\mathcal{F}^{\epsilon}}
\newcommand{\restr}[1]{\raisebox{-.5ex}{$\big |$}_{#1}}
\newcommand{\1}[1]{\mathds{1}_{#1}}
\newcommand{\Nom}{N_\omega}
\newcommand{\Ph}{\Psi_{n}}
\newcommand{\fle}{\ensuremath{\floor{\frac{1}{\epsilon}}} }
\theoremstyle{plain}
\newtheorem{theorem}{Theorem}[section]
\newtheorem*{theorem*}{Theorem}
\newtheorem{lemma}[theorem]{Lemma}
\newtheorem{proposition}[theorem]{Proposition}
\newtheorem*{corollary}{Corollary}
\theoremstyle{definition}
\newtheorem{definition}[theorem]{Definition}
\newtheorem{example}[theorem]{Example}
\theoremstyle{remark}
\newtheorem*{remark}{Remark}
\newtheorem*{claim}{Claim}
\begin{document}
\maketitle
%\tableofcontents
%\newpage
\vspace{-.25in}
\begin{abstract}
	We introduce a non-standard model for percolation on the integer lattice $\Z^2$. Randomly assign to each vertex $a \in \Z^2$ a potential, denoted $\phi_a$,  chosen independently and uniformly from the interval $[0, 1]$. For fixed $\epsilon \in [0,1]$, draw a directed edge from vertex $a$ to a nearest-neighbor vertex $b$ if $\phi_b < \phi_a + \ep$, yielding a directed subgraph of the infinite directed graph $\overrightarrow{G}$ whose vertex set is $\Z^2$, with nearest-neighbor edge set. We define notions of weak and strong percolation for our model, and observe that when $\ep = 0$ the model fails to percolate weakly, while for $\ep = 1$ it percolates strongly. We show that there is a positive $\epsilon_0$ so that for $0 \le \ep \le \ep_0$, the model fails to percolate weakly, and that when $\ep > p_\text{site}$, the critical probability for standard site percolation in $\Z^2$, the model percolates strongly. We study the number of infinite strongly connected clusters occurring in a typical configuration. We show that for these models of percolation on directed graphs, there are some subtle issues that do not arise for undirected percolation. Although our model does not have the finite energy property, we are able to show that, as in the standard model, the number of infinite strongly connected clusters is almost surely 0, 1 or $\infty$.
	
\end{abstract}

\textit{2010 Mathematics Subject Classification}:  60K35, 82B43\medskip

\textit{Keywords:}  percolation, integer lattice,  phase transition, infinite clusters 

\section{Introduction} \label{sec:intro}

In this paper we introduce and establish some preliminary results about the following family of non-standard models for percolation on the directed integer lattice $\Z^2$.  Randomly assign a {\em potential} to each vertex in $\Z^2$, where the values are chosen independently and uniformly from the interval $[0,1]$. Such an assignment is called a {\em vertex configuration}; if $\phi$ is such a configuration and $a \in \Z^2$, we designate the value of $\phi$ at $a$ by $\phi_a$. Fix $\epsilon \in [0,1]$, and for nearest neighbor vertices $a$ and $b$, draw a directed edge from $a$ to $b$ if $\phi_b < \phi_a + \ep$, giving a subgraph of the nearest neighbor graph on $\Z^2$.
Thus, each vertex configuration gives rise to an {\em edge configuration}, and there is a natural probability measure (the push-forward of Lebesgue product measure on $[0,1]^{\Z^2}$) on the edge configuration space. The {\em Lightning Model} refers to this family (for $0 \le \ep \le 1$), or perhaps a fixed member of this family,  of configuration spaces. 

%A {\em path of length n} from $a$ to $b$ in a given configuration is a sequence of distinct vertices $a = a_0, a_1, \dots, a_n = b$ so that $a_i$ is a neighbor of $a_{i+1}$ and $a_i \to a_{i+1}$ for $0 \le i \le n-1$. In this case, we write $a \tep b$. If both $a \tep b$ and $b \tep a$, we write $a \lrep b$. 

If there is an infinite path originating at the origin 0, we say the configuration {\em percolates weakly}. Define the {\em strong cluster} of 0 to be the strongly connected component containing 0, namely, the set of vertices $a$ for which there is a directed path from 0 to $a$ and also a directed path from $a$ to 0. When this cluster is infinite, we say that the configuration {\em percolates strongly}. It is immediate that strong percolation implies weak percolation.  

We are interested mainly in the question: for which values of $\ep$ does the Lightning Model percolate with positive probability?  

%An elementary argument given in Section \ref{sec:mono} shows that the percolation probability is monotone in $\epsilon$. 
%Section \ref{sec:alphaomega} has a result indicating the directed case has some of the symmetry found in the non-directed case:  the probability that there
%is an infinite path starting at 0 is the same as the probability that there is an infinite path ending at 0. 
%This hints that weak percolation may force strong percolation; it is an open question whether this occurs. 
 In Section \ref{sec:lowerweak} we find an $\epsilon_0>0$ so that for $0 \le \ep < \epsilon_0$, weak percolation fails to occur. While soft arguments establish the existence of such an $\epsilon_0>0$, our arguments
(based on computing the spectral radius of certain operators) give an explicit non-trivial lower bound.
We also show that when $\ep > p_{\text{site}}$, the critical probability for standard site percolation on $\Z^2$, strong percolation occurs in the Lightning Model. 
The estimates in this section are not sharp, and there is a substantial gap between $\ep_0$ and $p_\text{site}$.  This leaves the question of determining  precise {\em critical values} $0 < \epsilon_w \le \epsilon_s < 1$ so that for $0 < \epsilon_w$, weak percolation does not occur, and for $\ep > \ep_w$, weak percolation occurs with positive probability. Similarly for $\ep_s$ and strong percolation. We conjecture that $\ep_w=\ep_s$.

In the standard (non-directed) site and bond percolation models, it is straightforward to show using ergodicity and the finite energy condition that for each $p$, there is $N_p\in\{0,1,\infty\}$ such that the number of infinite clusters for a.e.\ configuration is $N_p$ (first established in \cite{NS}). Furthermore a well-known argument of Burton and Keane (\cite{BK}) shows that $N_p$ only takes the values 0 and 1 in these models. These questions become far more subtle when studying strongly connected clusters in the Lightning Model. Firstly the model lacks the finite energy condition, but more seriously strong connectedness in directed graphs is a much more sensitive property than connectedness in undirected graphs. For example, changing a single edge can break a single strongly connected cluster into infinitely many finite clusters. In Section \ref{sec:numcom}, we establish that in the Lightning Model, the number of strongly connected clusters is almost surely 0, 1 or $\infty$. Our proof is strictly two-dimensional, and we do not know if the corresponding statement holds in the higher-dimensional version of the model. In Section \ref{sec:open}, we present some open questions, and give further examples highlighting the difficulties with Burton-Keane type arguments in directed settings.  As Grimmett (\cite{GG2}) indicates, ``The Burton-Keane method is canonical of its type, and is the first port of call in any situation
where such a uniqueness result is needed". It seems that for this model, a different sort argument must be used, which presents a truly interesting situation. 

We use the term {\em Lightning Model} because the base idea (since modified to its present form) originated as a simple model for lightning in which preferred transitions are possible based on relative values of a potential. We discovered later that in fact a model similar to the one in this paper, in three dimensions, had been proposed by climatologists (\cite{RPKTR}). Although their paper focuses on basic geometric properties of simulations in finite regions, the double connection confirmed our name choice.

\begin{comment}   
For the class of percolation models satisfying the {\em finite energy condition}, which includes the standard site (or bond) percolation models on $\Z^d$, it was established in \cite{NS} that the number of infinite clusters is almost surely 0, 1, or $\infty$. In Section 5 we establish the same result for  the Lightning Model. Our methods are specialized for the directed case, and depend upon the fact that we are in 2 dimensions (see the comments at the end of the Section.) In a famous paper, Burton and Keane (\cite{BK}) ruled out the infinitely many component case for the finite energy models. However there are certain difficulties in the directed case which seem to preclude directly applying the main component of their argument; see the comments in Section \ref{sec:open}. Also in Section \ref{sec:open} we demonstrate another reason to be careful in the directed case: it is possible to create an infinite strong cluster where there was previously none, just by changing one edge (Example \ref{ex:inffromfin}). 

The definitions given in Section \ref{sec:defn} are formal, and one could argue that a short-hand version (see the Remark immediately after Definition \ref{def:pep})  could describe the model more efficiently . However the arguments in Section \ref{sec:numcom}, and to a lesser extent in Section \ref{sup-nice}, require a clarity that is well-served by the formal definitions. 

\end{comment}

\section{Preliminaries} \label{sec:first}
\subsection{Basic Definitions, Paths, Clusters} \label{sec:defn}

Our base graph is the infinite directed nearest-neighbor graph $\overrightarrow{G}$ whose vertex set is 
$\Z^2$, with edge set $E = \{(a, a \pm e_i): a \in \Z^2, \, i = 1, 2\}$, where $e_1$ 
and $e_2$ are the unit coordinate vectors.  Adjacent vertices $a, b \in \Z^2$ are  called {\em neighbors}.

\begin{definition}
 The set \V of \emph{vertex configurations}, is defined as 
\[ \mathbb{V} = [0,1]^{\mathbb{Z}^2},  \]
where $[0, 1]$ is the unit interval equipped with the usual topology. We equip $\mathbb V$ with the product topology and the Borel $\sigma$-algebra $\mathcal B$, and put Lebesgue product measure $\lambda$ on  $(\mathbb V, \mathcal B)$.

For a fixed vertex configuration $\phi \in \mathbb V$, we will write $\phi_a$ (or when clarity requires, $\phi(a)$) to denote the value of $\phi$ at vertex $a$, and call it the \emph{potential} at $a$.
\end{definition}

\begin{definition}
The set of \emph{edge configurations} is defined as
\[ \E = {\{\,  \raisebox{2pt}{\tikz\draw[black,fill=black] (0,0) circle (.3ex);}\; , \, \to \, \}}^E, \]
again equipped with the product topology and Borel $\sigma$-algebra $\mathscr B$.
\end{definition}

\begin{definition}
For a parameter $\epsilon \in [0,1]$, define $\fep: \V \to \E$ by  
\[ \fep(\phi) = z, \]
where for every edge $e = (a, b)$ in $E$, 
\[ z_e := z(a,b) =
\begin{cases}
\rightarrow 		& \text{ if } \phi_b < \phi_a +\ep; \\
\raisebox{2pt}{\tikz\draw[black,fill=black] (0,0) circle (.3ex);}	&\text{ otherwise.}
\end{cases} \]

We think of $\to$ as representing a directed edge and $ \, \raisebox{2pt}{\tikz\draw[black,fill=black] (0,0) circle (.3ex);}\,$ representing the absence of such an edge. Hence for a fixed $\phi$ and $\ep$, $\fep(\phi)$ represents an infinite directed subgraph of $\overrightarrow{G}$ with vertex set $\Z^2$, with a directed edge from the vertex $a$ to the vertex $b$ iff the potential at $b$ is lower than that at $a$, up to a tolerance of $\epsilon$.  We often use standard percolation terminology and call the directed edges in this subgraph {\em open}.
\end{definition}

\begin{definition} \label{def:pep}
For  $\ep \in [0,1]$ define a probability measure $\Pep$ on $(\E, \mathscr B)$ by 
\[ \Pep(A) := \lambda(\fep^{-1}(A)), \quad A \in \mathcal B, \]
where $\lambda$ is the Lebesgue product measure on $(\mathbb V, \mathcal B)$. 
\end{definition}

\begin{remark} An efficient way to describe our setup is to view the vertex potentials as a family $\{U_a\}_{a \in \Z^2}$ of i.i.d. standard uniform random variables. For fixed  $\ep > 0$,   declare the directed nearest neighbor edge $(a,b)$ to be $\ep$-open if $U_b < U_a + \ep$. 
\end{remark}

\begin{definition} By the {\em Lightning Model} we mean the aggregate of probability spaces $(\E, \mathscr B, \Pep)$, for $0 \le \ep \le 1$. We may also refer to the space with a fixed $\epsilon$ as the Lightning Model. 
	
\end{definition}

We want to study  the (typical) component structure of edge configurations in the Lightning Model, for which the following definitions are useful. 
\begin{definition}
	 A {\em path from a to b} in a configuration $\fep(\phi)$ is a sequence of distinct vertices $a=a_0, a_1, a_2, \ldots, a_{n-1}, a_n=b \in \mathbb{Z}^2$ such that $a_i$ is a neighbor of $a_{i+1}$ and $\fep(\phi)(a_i, a_{i+1}) = \, \to \,$ for all $0\le i<n$. In this case we write $a \tep b$. If both $a \tep b$ and $b \tep a$, we write $a \lrep b$, and say that $a$ and $b$ are {\em bi-directionally connected}. 
\end{definition}
Note that in the case $a\lrep b$, there is no requirement that the forward and backward paths are the reverse of each other.

\begin{definition}
	Let $a \in \mathbb{Z}^2$. We define the \emph{strongly-connected component} of $a$ in $\fep(\phi)$ to be
	\[ \{ b \in \mathbb{Z}^2 : a \lrep b\} \; .  \]
	
	This is also called the \emph{strongly-connected cluster of a}.
\end{definition}

For the remainder of this paper, whenever we discuss paths, clusters, etc., the vertex configuration $\phi$ is assumed to have been sampled from $\V$ and the relations $\tep$, $\lrep$ have been generated by $\fep(\phi)$ as above.
 
\begin{definition}
Let $a \in \mathbb{Z}^2$. If $\{b \in \Z^2: a \tep b\}$ is infinite, then we say that
$a$ \emph{percolates weakly}.
\end{definition}

It is clear from the definition that $\{\phi: a\text{ percolates weakly}\}$ is measurable. 
K\"onig's Lemma (\cite{Kon}) implies that $a$ percolates weakly if and only if there is an infinite path starting from $a$.

\begin{definition}
Let $a \in \mathbb{Z}^2$. If the strongly-connected component of $a$ is infinite, we say that $a$ \emph{percolates strongly}. 
\end{definition}

The set $\{\phi:a\text{ percolates strongly}\}$ is also measurable.
We use the phrase {\em weak} (resp., strong) {\em percolation} to mean weak (resp., strong) percolation at the origin. We denote the events of weak percolation and strong percolation by $\{ 0 \tep \infty \}$ and $\{ 0 \lrep \infty \}$, respectively.

\subsection{Basic results} \label{sec:mono}

Percolation in the Lightning Model is monotone in \ep. Here is the argument.  
%Formally an edge configuration is an element $z \in \E = {\{\,  \raisebox{2pt}{\tikz\draw[black,fill=black] (0,0) circle (.3ex);}\; , \, \to \, \}}^E$, where $E$ is the set of (directed) edges of $\overrightarrow{G}$. Specify that an edge $e$ for which $z_e \, = \, \to$ \, is {\em open}, and otherwise $e$ is closed. Identify each edge configuration with the resulting infinite directed subgraph of $\overrightarrow{G}$. 
We define the following partial order on $\E$: 

\begin{definition}  Given two edge configurations $v, w \in \E$, we say $v \preceq w$ if every open edge of $v$ is also open in $w$.  
\end{definition}

It is clear that the maps $\{f_\ep\}$ are monotone in \ep \, with respect to this partial order; that is, 

\begin{equation}\label{prop:mon}
	\tag{MP}
\ep \le \ep'  \; \implies  \fep(\phi) \preceq \fepp(\phi) \; \forall \; \phi\in \V. 
\end{equation} 

In other words if $\ep \, \le \ep'$ then the directed graph defined by $\fep(\phi)$ is a subgraph of $\fepp(\phi)$. The following immediate corollary is stated as a theorem because of its importance: 

\begin{theorem}\textbf{\em Monotonicity of Percolation Probability} \label{thm:monot} \medskip \\
Both $\Pep(\{\text{weak percolation}\})$ and 
$\Pep(\{\text{strong percolation}\})$ are non-decreasing functions of $\epsilon$.
\end{theorem}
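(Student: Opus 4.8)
The plan is to derive the statement directly from the monotonicity property \eqref{prop:mon}, coupling all of the measures $\Pep$ on the single probability space $(\V,\mathcal B,\lambda)$ via the maps $\fep$, and exploiting the fact that weak and strong percolation are both \emph{increasing} events on $\E$ with respect to the partial order $\preceq$.

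First I would verify that the two events are increasing. Set
\[
W = \{z \in \E : 0 \text{ percolates weakly in } z\}, \qquad S = \{z \in \E : 0 \text{ percolates strongly in } z\}.
\]
If $v \preceq w$ and $v \in W$, then every open edge of $v$ is open in $w$, so any infinite directed path from $0$ in $v$ is also a directed path in $w$; hence $w \in W$, and $W$ is increasing. Similarly, if $v \preceq w$ and $0 \lrep b$ in $v$, then the forward path $0\tep b$ and the backward path $b\tep 0$ present in $v$ remain directed paths in $w$, so $0 \lrep b$ in $w$; thus the strongly-connected cluster of $0$ can only enlarge as edges are added, and $S$ is increasing as well.

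Now fix $\ep \le \ep'$. By \eqref{prop:mon} we have $\fep(\phi)\preceq\fepp(\phi)$ for every $\phi\in\V$, so, since $W$ is increasing, $\fep(\phi)\in W \implies \fepp(\phi)\in W$; that is,
\[
\fep^{-1}(W) \subseteq \fepp^{-1}(W) \quad\text{in } (\V,\mathcal B).
\]
The set $\fep^{-1}(W)$ is precisely the ($\phi$-)event ``$0$ percolates weakly'', already noted to be measurable, so applying $\lambda$ and Definition~\ref{def:pep} gives
\[
\Pep(\{\text{weak percolation}\}) = \lambda\big(\fep^{-1}(W)\big) \le \lambda\big(\fepp^{-1}(W)\big) = \mathbb{P}_{\ep'}(\{\text{weak percolation}\}).
\]
Running the identical argument with $S$ in place of $W$ yields the monotonicity for strong percolation.

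I do not anticipate a genuine obstacle here. The only point deserving a moment's care is confirming that strong percolation is an increasing event — one must check that adding open edges cannot break an existing bi-directional connection — and this is immediate, since a directed path in $v$ is still a directed path in any $w\succeq v$. This should be contrasted with the much more delicate effect of \emph{removing} a single edge flagged in the introduction, which is why the uniqueness questions of Section~\ref{sec:numcom} remain subtle even though the present monotonicity is essentially formal.
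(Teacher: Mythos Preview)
Your proof is correct and follows essentially the same route as the paper: both arguments pull the events back to $\V$ via $\fep$ and use the monotonicity property \eqref{prop:mon} to obtain the containment $\fep^{-1}(W)\subseteq f_{\ep'}^{-1}(W)$ (and likewise for $S$), which is exactly the paper's $W_\ep\subseteq W_{\ep'}$, $S_\ep\subseteq S_{\ep'}$. You have simply made explicit the step the paper leaves to the phrase ``follows immediately'', namely that $W$ and $S$ are $\preceq$-increasing events in $\E$.
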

\begin{proof}
Let $W_\ep = \{\phi\in\V\colon 0 \tep \infty\text{ in $\fep(\phi)$}\}$ and $S_\ep = \{\phi\in\V\colon
0 \lrep \infty\text{ in $\fep(\phi)$}\}$. It follows immediately from (\ref{prop:mon}) that for $\ep \le \ep'$, $W_\ep \subseteq W_\epp$ and $S_\ep \subseteq S_\epp$.   	
\end{proof}

We now show that 0 {\em  goes to infinity with the same probability that infinity comes to }0.
This result hints that weak percolation could force strong percolation. 

\begin{definition} Denote $\{v \in \Z^2: v \tep 0\}$ as the {\em attracting set}  of the origin. If this set is infinite, we write $\infty \tep 0$. 
\end{definition}
Observe that the event $\{\infty \tep 0\}$ is measurable. Recall that weak percolation is the event $\{0 \tep \infty\}$.

\begin{theorem}
\label{probability of infinite alpha and omega}
$\Pep(\{ 0 \tep \infty \}) = \Pep(\{ \infty \tep 0 \}).$
\end{theorem}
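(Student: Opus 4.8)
The plan is to exploit the symmetry of the uniform distribution on $[0,1]$ about $\tfrac12$. Define $R\colon\V\to\V$ by $(R\phi)_a = 1-\phi_a$ for every $a\in\Z^2$. Since $x\mapsto 1-x$ preserves Lebesgue measure on $[0,1]$, the map $R$ is a measure-preserving homeomorphism of $(\V,\mathcal B,\lambda)$ with $R\circ R=\mathrm{id}$. The key feature of $R$ is that it intertwines $\fep$ with edge-reversal: for neighbors $a,b$ one has
\[
(R\phi)_b<(R\phi)_a+\ep \iff 1-\phi_b<1-\phi_a+\ep \iff \phi_a<\phi_b+\ep,
\]
so the directed edge $(a,b)$ is open in $\fep(R\phi)$ if and only if the directed edge $(b,a)$ is open in $\fep(\phi)$. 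Both inequalities here are strict, so the identity $\fep\circ R = (\text{edge-reversal})\circ\fep$ holds on the nose, with no exceptional set.

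Next I would record the analogue, for the attracting set, of the K\"onig's Lemma observation already made for weak percolation: because $\overrightarrow G$ is locally finite, $\{v:v\tep 0\}$ is infinite if and only if there is an infinite path $\cdots\to a_2\to a_1\to a_0=0$ terminating at the origin (apply K\"onig's Lemma to the tree of finite paths ending at $0$). Combining this with the previous paragraph, reversing every edge carries an infinite path out of $0$ to an infinite path into $0$ and conversely, so, viewed as subsets of $\V$,
\[
\fep^{-1}\{0\tep\infty\} \;=\; R^{-1}\!\left(\fep^{-1}\{\infty\tep 0\}\right),
\]
where we use $R=R^{-1}$ to pass back and forth. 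Since $\Pep=(\fep)_*\lambda$ and $R$ preserves $\lambda$, taking $\lambda$-measure of both sides yields $\Pep(\{0\tep\infty\})=\Pep(\{\infty\tep 0\})$.

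There is no real obstacle here; the only points needing a little care are checking that the strict inequality in the definition of $\fep$ transforms correctly under $\phi\mapsto 1-\phi$ (it stays strict, so $R$ conjugates $\fep$ to edge-reversal exactly, not merely almost surely), and writing out the K\"onig's Lemma argument on the incoming side so that $\{\infty\tep 0\}$ is literally the $R$-image of the weak-percolation event. The rest is bookkeeping with the push-forward $\Pep=(\fep)_*\lambda$.
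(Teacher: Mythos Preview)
Your proof is correct and is essentially the same as the paper's: the paper defines the involution $I(\phi)_a=1-\phi_a$ on $\V$ (your $R$) and the edge-reversal map $M$ on $\E$, checks $f_\epsilon\circ I=M\circ f_\epsilon$ and $\lambda\circ I^{-1}=\lambda$, and concludes $\Pep\circ M^{-1}=\Pep$, whence $M$ carries $\{0\tep\infty\}$ to $\{\infty\tep 0\}$. Your extra invocation of K\"onig's Lemma on the incoming side is harmless but unnecessary, since both events are defined as ``the relevant set of vertices is infinite'' and edge-reversal interchanges those sets directly.
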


\begin{proof} 
Given an edge $e=(a,b)$, its {\em flip} is the edge $\bar e = (b,a)$. Define the {\em mirror transformation} $M: \E \to \E$ as sending each edge to its flip:  
\[ M(z)_e =z_{\bar e}, \quad z \in \E, \; e \in E. \]
 
Let $I$ be the involution on vertex configurations defined by $I:\V\to \V$, $I(\phi)_a=1-\phi_a$. It is easy to check that $f_\epsilon(I(\phi))_e=M(f_\epsilon(\phi))_e$. Recalling that $\lambda$ is Lebesgue product measure on $\V$, one also checks that  $\lambda\circ I^{-1}=\lambda$. It follows that
$$\Pep\circ M^{-1}=
\lambda\circ f_\epsilon^{-1}\circ M^{-1}=\lambda\circ (M\circ f_\epsilon)^{-1}=
\lambda\circ(f_\epsilon\circ I)^{-1} 
=\lambda\circ f_{\epsilon}^{-1}=\Pep,$$ so that $\Pep$ is invariant under $M$. To finish, observe that the mirror image of the event $\{0 \tep \infty\}$ is the event $\{\infty \tep 0\}$. 
%
%%To show that $M$ is a measure-preserving transformation, we must demonstrate that $M$ is measurable, and that for any cylinder set $C \subseteq \Omega$, we have $ \Pep (M^{-1}(C)) = \Pep (C)$.
%%
%%Since $M$ is continuous with respect to the product topology, it is automatically Borel-measurable. 
%%
%%
%%Finally, we want to show that for any cylinder set $C \subseteq \Omega$, we have 
%%$\Pep(M^{-1}(C)) = \Pep(C)$. Notice that $\Pep \circ M^{-1}$ is itself a measure on 
%%$\Omega$, so we would in fact like to show that it is the same measure as $\Pep$. 
%%Using the definition of $\Pep$, this is really the same as showing 
%%$\lambda \circ \fep^{-1} \circ M^{-1}$ is the same as $\lambda \circ \fep^{-1}$.
%%
%%Instead of defining a ``mirror" transformation on edge configurations, notice we could just have easily have defined an ``involution" on vertex configurations,
%%Inv: $\V \to \V$ by Inv$(\omega)_v = 1 - \omega_v$
%
%... \textbf{(Continue)}
%%TODO finish up this proof from my notes
%
\end{proof}

\section{Upper and Lower Bounds for Percolation} \label{sec:lowerweak}

In this section we show that for sufficiently small $\ep$, weak percolation almost surely fails to occur. We first give an elementary counting argument which shows that weak percolation fails when $\ep = 0$. Then we sketch a soft argument showing that weak percolation also fails for some positive $\ep$. This argument, however, does not give explicit non-trivial lower bounds for such $\ep$, so we include a third argument, based upon estimating the spectral radius for an appropriate linear transformation, which gives a non-trivial lower bound. We note that it is certainly not sharp. 

Here is an argument showing  that weak percolation fails when $\ep = 0$. Fix a path $0 \to a_1 \dots \to a_n$ in $\overrightarrow{G}$. This path is open in $f_0(\phi)$ if and only if $\phi_{0}>\phi_{a_1}>
\ldots >\phi_{a_n}$. This last event has probability $1/(n+1)!$ since the values $\phi_{0},\ldots,
\phi_{a_n}$ are chosen independently and uniformly from $[0, 1]$. But since paths are simple, there are at most $4\cdot 3^{n-1}$ paths of length $n$ starting from the origin. Hence the probability that at least one of the paths is open is bounded above by $4\cdot 3^{n-1}/(n+1)!$, which clearly tends to 0 as $n \to \infty$. Since weak percolation can occur only if there is an open path of length $n$ (from the origin) for each $n$, we see that weak percolation fails almost surely when $\ep = 0$. 

Here is a soft argument showing that weak percolation fails also for some positive $\epsilon$. Fix a non-self-intersecting path with $nk$ edges (where $k$ is to be fixed below and $n$ will be increased
to $\infty$) and vertices $a_0=0,\ldots,a_{nk}$. 
Then the probability that the path is open is bounded above by the probability that each of the paths $a_{ik}\to\ldots \to a_{ik+(k-1)}$ are
open for $i=0,\ldots,n-1$. Since these events are independent, the probability of this is $\mathbb P_\epsilon(\{a_0\to\ldots\to a_{k-1}\text{ is open}\})^n$. 
Next one sees  that $\mathbb P_\epsilon(\{a_0\to\ldots\to a_{k-1}\text{ is open}\})$ is a continuous function of $\epsilon$, converging to $1/k!$ as $\epsilon\to 0$. 
Now choose $k = 7$ (so that $k!>3^k$) and fix an $\epsilon>0$ sufficiently small so that
$\mathbb P_\epsilon(a_0\to\ldots\to a_6\text{ is open})<3^{-7}$. Then the standard counting argument shows that the probability there exists an open path of length $nk$ starting from the origin approaches 0 as $n\to\infty$.  

%A similar approach, using a soft argument to bound the probability that a fixed path of length $n$ is open using continuity and independence (and counting paths), would work to establish the stronger result that percolation fails for some positive \ep.  That argument, however, fails to give an explicit non-trivial lower bound.
%

 Next we give a more detailed argument which obtains an explicit lower bound by computing the probability that a fixed path of length $n$ is open using an iterated linear operator whose spectral radius may be accurately estimated. 

\begin{theorem}
Let $\epsilon_0=0.1481$. In the Lightning Model with $\epsilon\le \epsilon_0$, the 
probability of weak percolation is zero.
\end{theorem}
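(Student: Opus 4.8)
The approach I would take is a first‑moment (union bound) argument whose probabilistic content is isolated in the spectral radius of an integral operator. First I would reduce to a path count. If $0$ percolates weakly then, as noted above via K\"onig's Lemma, for every $n$ there is an open self‑avoiding path of length $n$ from $0$, so
\[
\Pep(\{0\tep\infty\})\ \le\ \sum_{\gamma}\Pep\bigl(\{\gamma\text{ open in }\fep(\phi)\}\bigr),
\]
the sum over self‑avoiding paths $\gamma$ of length $n$ starting at $0$. Since the vertices of such a $\gamma$ are distinct, their potentials are i.i.d.\ uniform on $[0,1]$, so $\Pep(\{\gamma\text{ open}\})$ depends only on $n$; write it $p_n(\epsilon)$. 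If $c_n$ is the number of such paths, then $\Pep(\{0\tep\infty\})\le c_n\,p_n(\epsilon)$, and it suffices to prove $c_n\,p_n(\epsilon)\to 0$. The trivial estimate is $c_n\le 4\cdot 3^{\,n-1}$; but (see the last paragraph) to reach the stated constant $0.1481$ one actually needs a \emph{rigorous} upper bound $\mu$ for the connective constant of $\Z^2$, so that $c_n\le C\mu^{n}$.

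Next I would compute $p_n(\epsilon)$ with a transfer operator. Conditioning on the path $0=a_0\to\cdots\to a_n$ and integrating out $\phi_{a_n},\phi_{a_{n-1}},\dots$ one at a time gives
\[
p_n(\epsilon)=\int_0^1\bigl(T_\epsilon^{\,n}\mathbf 1\bigr)(x)\,dx,\qquad
(T_\epsilon f)(x):=\int_0^{\min(x+\epsilon,\,1)}\! f(y)\,dy ,
\]
with $\mathbf 1$ the constant function and $T_\epsilon$ acting on $C([0,1])$. Since $T_\epsilon$ is a positive compact operator whose kernel is positive on $\{y<x+\epsilon\}$ (and $T_\epsilon^{\,k}$ has a strictly positive kernel once $k\epsilon>1$), the Krein--Rutman theorem shows $\rho(T_\epsilon)$ is a positive eigenvalue with a positive continuous eigenfunction, whence $p_n(\epsilon)^{1/n}\to\rho(T_\epsilon)$. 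Thus $c_n\,p_n(\epsilon)\to 0$ --- and so weak percolation has probability $0$ --- as soon as $\mu\,\rho(T_\epsilon)<1$. Because $p_n(\epsilon)$ is nondecreasing in $\epsilon$ by \eqref{prop:mon}, it is enough to verify $\mu\,\rho(T_\epsilon)<1$ at the single value $\epsilon=\epsilon_0=0.1481$.

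The heart of the proof is therefore a sufficiently sharp rigorous upper bound on $\rho(T_{\epsilon_0})$. Differentiating $T_\epsilon u=\rho u$ turns the eigenvalue problem into the delay equation $\rho\,u'(x)=u(x+\epsilon)$ on $[0,1-\epsilon]$ together with $u\equiv\mathrm{const}$ on $[1-\epsilon,1]$; solving it by successive integration from the right shows that the leading eigenfunction is piecewise polynomial, with polynomial pieces of increasing degree on the intervals $[\,1-(k+1)\epsilon,\,1-k\epsilon\,]\cap[0,1]$ and coefficients following an explicit recursion in the single parameter $\epsilon/\rho$. Imposing $\int_0^1 u=\rho$ then produces an explicit algebraic equation for $\rho$, whose largest root (for $\epsilon_0=0.1481$, about $0.370$) can be bounded rigorously; equivalently, and more cleanly to write up, one exhibits a positive piecewise‑polynomial function $u$ (a truncation or small perturbation of this eigenfunction) with $T_{\epsilon_0}u\le c\,u$ pointwise and $c<1/\mu$, whence $\rho(T_{\epsilon_0})\le c$ by the Collatz--Wielandt inequality.

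I expect this last estimate to be the only real obstacle, and it is delicate precisely because $0.1481$ is about the largest $\epsilon$ the method can handle. Naive test functions do not suffice: $u(x)=e^{\alpha x}$ gives only
\[
\rho(T_{\epsilon_0})\ \le\ \inf_{\alpha>0}\frac{e^{\alpha\epsilon_0}-e^{-\alpha(1-\epsilon_0)}}{\alpha}\ \approx\ 0.40 ,
\]
which beats no admissible $\mu$; in particular the trivial $\mu=3$ is useless here, since in fact $\rho(T_{\epsilon_0})>\tfrac13$. One genuinely has to use the piecewise‑polynomial structure of the true eigenfunction (or discretize $T_{\epsilon_0}$ to a finite matrix with rigorous error control). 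The remaining ingredients --- the union bound, the derivation of $T_\epsilon$, the Krein--Rutman input, and the monotonicity reduction --- are routine.
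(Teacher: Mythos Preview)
Your approach is correct and is essentially the paper's: a first-moment bound $\Pep(\{0\tep\infty\})\le c_n\,p_n(\epsilon)$, the transfer operator $T_\epsilon f(x)=\int_0^{\min(x+\epsilon,1)}f(t)\,dt$ (the paper's $\Lep$), and the criterion $\mu\,\rho(T_\epsilon)<1$ using a rigorous connective-constant bound ($\mu\le 2.6792$). You also correctly identify that the trivial $\mu=3$ cannot work at $\epsilon_0=0.1481$ and that the eigenfunction is piecewise polynomial on the intervals $(1-(j{+}1)\epsilon,\,1-j\epsilon]$.

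The one place the paper is cleaner than your plan is the spectral-radius step. You propose solving a delay equation for the Krein--Rutman eigenfunction, or exhibiting a piecewise-polynomial $u$ with $T_{\epsilon_0}u\le c\,u$, or ``discretizing to a finite matrix with rigorous error control.'' The paper avoids all of this by observing that the space $\Fspace$ of functions that are polynomial of degree $\le j$ on $I_j=(1-(j{+}1)\epsilon,\,1-j\epsilon]$ is \emph{exactly} $\Lep$-invariant and contains the starting vector $\mathbf 1$. Thus the iterates $(\Lep)^n\mathbf 1$ never leave $\Fspace$, $p_n(\epsilon)=F^\epsilon_{n+1}(1)$ is read off exactly from the $\tbinom{M+1}{2}\times\tbinom{M+1}{2}$ matrix $A$ of $\Lep|_{\Fspace}$ (here $M=\lceil 1/\epsilon_0\rceil=7$, so $A$ is $28\times 28$), and one only needs $\rho(A)<1/\mu$ --- no Krein--Rutman, no approximation error. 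The paper computes $\rho(A)\approx 0.3731$, matching your ``about $0.370$.'' So your piecewise-polynomial intuition is exactly right; framing it as an invariant subspace rather than an eigenfunction problem is the shortcut you are missing.
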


\begin{proof}
Fix $\ep > 0$, and consider a fixed path in $\overrightarrow{G}$ consisting of vertices with potential values $x_0, x_1, x_2, \ldots, x_n$. The path will be open iff the values satisfy $x_0 \gep x_1 \gep x_2 \gep \cdots \gep x_n$, where we write $x_a \gep x_b$ to indicate $x_b < x_a + \epsilon$. Since the values at each of the vertices are chosen independently and uniformly, the probability of the set of vertex configurations satisfying $x_0 \gep x_1 \gep x_2 \gep \cdots \gep x_n$ is given by
\begin{equation}\label{eq:pval}	\int_0^1 dx_0
	\int_0^{\min(x_0+\epsilon, 1)}dx_1 
	\int_0^{\min(x_1+\epsilon, 1)}dx_2
	\ldots
	\int_0^{\min(x_{n-1}+\epsilon, 1)}dx_{n} \; .
\end{equation}

%Indeed, we are integrating 1 over  exactly the set of $(x_0,\ldots,x_n) \in [0,1]^{n+1}$ which satisfy $x_0  \gep x_1 \gep \ldots \gep x_n\, $.
We  define now a related sequence of functions that lends itself to recursive evaluation: 
\begin{definition} Let $y \in [0,1]$. For $\ep > 0$ and $n = 0, 1, \dots$, define
\[ \Fep_n(y) = \int_0^{\min(y+\epsilon, 1)}dx_1 
						 \int_0^{\min(x_1+\epsilon, 1)}dx_2
						 \ldots
						 \int_0^{\min(x_{n-1}+\epsilon, 1)}dx_{n}.
\]
\end{definition}
$\Fep_n(y)$ gives the probability of being able to continue $n$ steps from a vertex having value $y$, i.e., it is the probability that there exists an open path of length $n$ beginning at a given vertex, conditioned on that vertex having potential value $y$. 

This last formula contains one fewer integral than appeared in expression \eqref{eq:pval},
and the probability that a fixed path of length $n$ is open is $\Fep_{n+1}(1)$.

\begin{example}\label{ex:polys}  It is enlightening to calculate the first few of these directly. Here we are assuming that $\ep < 1/2$. 
\[ \Fep_0(y) = 1. \]

\[ \Fep_1(y) = 
\begin{cases}
1 								& y \in (1-\epsilon, 1]; \\
\epsilon + y 					& y \leq 1-\epsilon.
\end{cases} \]

\[ \Fep_2(y) = 
\begin{cases}
- \frac{\epsilon^2}{2} + \epsilon + \frac{1}{2}
& y \in (1-\epsilon, 1]; \\
- \frac{\epsilon^2}{2} + 2\epsilon - \frac{1}{2} + y
& y \in (1-2\epsilon, 1-\epsilon]; \\
\frac{3 \epsilon^2}{2} + 2 \epsilon y + \frac{y^2}{2}
& y \leq 1-2\epsilon.
\end{cases} \]
These piecewise defined polynomials in $y$ are continuous, as the values agree at the endpoints of each subinterval of definition.
\end{example}

The endpoints of the intervals of the piecewise definition are all of the form $1-j\epsilon$, or 0, motivating the following definition.

\begin{definition} Let $M = \ceil{\frac{1}{\ep}}$. For $j = 0, 1, \ldots, M-2$, we define
\[I_j = \left( 1-(j+1)\epsilon, 1-j\epsilon \right],\]
and for $j = M-1$ define
\[I_{M-1} = [0, 1-(M-1)\epsilon].\]
Then $\{I_0, I_1, \dots, I_{M-1}\}$ gives a partition of [0,1] into $M$ subintervals of length $\ep$ with perhaps the exception of $I_{M-1}$, which has length at most $\ep$.  
\end{definition}

\begin{comment} 
Using these definitions we may rewrite the previously calculated values of $\Fep_n(y)$ as follows. 

\[ \Fep_0(y) = 1. \]

\[ \Fep_1(y) = 
\begin{cases}
1 								& y \in I_0; \\
\epsilon + y 					& y \in I_1, \ldots, I_{M-1}.
\end{cases} \]

\[ \Fep_2(y) = 
\begin{cases}
- \frac{\epsilon^2}{2} + \epsilon + \frac{1}{2}
& y \in I_0; \\
- \frac{\epsilon^2}{2} + 2\epsilon - \frac{1}{2} + y
& y \in I_1; \\
\frac{3 \epsilon^2}{2} + 2 \epsilon y + \frac{y^2}{2}
& y \in I_2, \ldots, I_{M-1}.
\end{cases} \]

In Example \ref{ex:polys}, each of the calculated $\Fep_n(y)$ may be expressed in the form
\[ \Fep_n(y) = 
\begin{cases}
	\alpha^n_{0,0}											& y \in I_0; \\
	\alpha^n_{1,0} + \alpha^n_{1,1} y						& y \in I_1; \\
	\alpha^n_{2,0} +\alpha^n_{2,1} y + \alpha^n_{2,2} y^2	& y \in I_2; \\
	\vdots \\
	\alpha^n_{M-1,0} + \alpha^n_{M-1,1} y + \ldots + \alpha^n_{M-1,M-1} y^{M-1}
	& y \in I_{M-1}.\\
\end{cases} \]
for constants $\alpha^n_{0,0} ,\, \alpha^n_{1,0} ,\, \alpha^n_{1,1} ,\, \ldots ,\, \alpha^n_{M-1,M-1}$
that may depend on $\epsilon$. 
\end{comment} 

In Example \ref{ex:polys}, each of the calculated $\Fep_n(y)$, restricted to the interval $I_j$, is a polynomial in $y$ of degree $j$. We will now show that this pattern holds for all $n$ and $\ep \in (0, 1]$. We note that $\Fep_n(y)$ satisfies $\Fep_n(y) = \int_0^{\min(y+\epsilon, 1)} \Fep_{n-1}(x)dx$, which motivates the following.

Define a linear transformation on (suitable) functions by $$\Lep f(x) = \int_0^{\min(x+\epsilon, 1)} f(t) \, dt,$$ and set  \[ \Fspace = \big\{f: [0,1] \to \mathbb R: \ f\restr{I_j}\text{is a polynomial of degree }\leq j \big\}, \]
a finite dimensional vector space.

\begin{lemma} \label{lem:inv}
	For any $\epsilon>0$, $\Fspace$ is invariant under $\Lep$. 
\end{lemma}

\begin{proof}
	
	Let $s_j=1-j\epsilon$ so that $I_j=(s_{j+1},s_j]$ for $j<M-1$ and $I_{M-1}=[0,s_{M-1}]$. If $\1{I_j}$ denotes the indicator function of $I_j$, it is easily checked that if we define \[h_{j,i}(x) = \left( x-s_{j+1}\right)^i \cdot \1{I_j}(x), \]
	then the set of functions 
$$
\{h_{j,i}(x): j = 0, 1, \ldots, M-1 \text{ and } i = 0, 1, \ldots, j\}
$$
forms a basis for $\Fspace$. Hence the lemma will follow if we show that $\Lep h_{j,i}\in \Fspace$ for each $0\le j<M$ and $0\le i\le j$.
	
	We deal first with the case $j<M-1$. If $x\le s_{j+2}$, then $x+\epsilon\le s_{j+1}$ 
	so that $\big[0,\min(x+\epsilon,1)\big]$ does not intersect $I_j$. It follows that $\Lep h_{j,i}$ is identically 0 on $\bigcup_{k=j+2}^{M-1}I_k$. 
	
	If $x\ge s_{j+1}$, then $x+\epsilon\ge s_j$, so that 
	\begin{equation*}
		\int_0^{\min(x+\epsilon,1)}h_{j,i}(t)\,dt=\int_{I_j}h_{j,i}(t)\,dt=\frac{\epsilon^{i+1}}{i+1},
	\end{equation*}
	which is independent of $x$. That is, the restriction of $\Lep h_{j,i}$ to $\bigcup_{k=0}^{j}I_k$ is
	a degree 0 polynomial. 
	
	If $x\in (s_{j+2},s_{j+1}]$, then it is straightforward to calculate
	\begin{equation*}
		\int_0^{\min(x+\epsilon,1)}h_{j,i}(t)\,dt = \int_{s_{j+1}}^{x+\epsilon}h_{j,i}(t)\,dt = \frac 1{i+1}(x-s_{j+2})^{i+1}.
	\end{equation*}
	That is, the restriction of $\Lep h_{j,i}$ to $I_{j+1}$ is $\frac{1}{i+1}h_{j+1,i+1}$. Combining these, we have shown that
	\begin{equation}\label{eq:smallj}
		\Lep h_{j,i}=\frac{1}{i+1}\left(\sum_{k=0}^j \epsilon^{i+1}
	h_{k,0}+h_{j+1,i+1}\right).
	\end{equation}
	
	Similarly if $j=M-1$ and $x\in [0,1]$, then $x+\epsilon>s_{M-1}$, and we have
	\begin{equation*}
		\Lep h_{M-1,i}(x) =\int_{I_{M-1}}h_{M,i}(t)\,dt = \int_{0}^{1-(M-1)\epsilon}\big(t-(1-M\epsilon)\big)^i\,dt, 
	\end{equation*}
	which shows that
	\begin{equation}\label{eq:bigj}
		\Lep h_{M-1,i}=\frac{1}{i+1}\big(\epsilon^{i+1}-(M\epsilon-1)^{i+1}\big) = \frac{\epsilon^{i+1}-(M\epsilon-1)^{i+1}}{i+1}\sum_{k=0}^{M-1} h_{k,0}.
	\end{equation}
\end{proof}

\begin{remark}
Even though each $h_{j,i}$ is not continuous, one can easily check that after applying $\Lep$ each resulting function is continuous. Also notice that since $M=\lceil \frac1\epsilon\rceil$, $0\le M\epsilon-1<\epsilon$, so that all the coefficients of the matrix representing $\Lep$ with respect to this basis are non-negative.

\end{remark}

\begin{corollary}
$\Fep_n(y) \in \Fspace$ for every $n \in \N$.
\end{corollary}

\begin{proof}
$\Fep_0(y) = 1\in \Fspace$ by definition. The result follows by induction, since $\Fep_n(y) = \left((\Lep)^n1\right)(y)$. \end{proof}

In order to estimate the growth rate of $\Fep_{n+1}(1)$, the probability a fixed path of length $n$ is open, we may work entirely within $\Fspace$. Fix the ordered basis $S = \{h_{0,0} ,\, h_{1,0} ,\, h_{1,1} , h_{2, 0}, h_{2, 1}, h_{2, 2}, \, \ldots ,\, h_{M-1,M-1}\}$ for $\Fspace$ and let $A$ be the matrix representing $\Lep$ with respect to $S$.  $\Fep_0$ is constant function $1$ on the interval $[0,1]$, which we can write as the linear combination $ 1 = 1 h_{0,0} + 1 h_{1,0} + \ldots + 1 h_{M-1,0}$. Hence, its coordinate vector with respect to our ordered basis is $ \begin{bmatrix} 1 & 1 & 0 & 1 & 0 & 0 & \cdots & 0 \end{bmatrix}^T$. Since $\Fep_n= (\Lep)^n \, \Fep_0 $,  the coefficients of the function $\Fep_n$ with respect to the basis are given by
\[A^n
\begin{bmatrix} 1 & 1 & 0 & 1 & 0 & 0 & \cdots & 0 \end{bmatrix}^T. \]
We want to evaluate the function this represents, at the value $y =1$. Evaluating at $y=1$ means we are only interested in the values of the resulting function on the interval $I_0$, which is given by the coefficient of $h_{0,0}$, which we can get by simply taking the first entry of the previous matrix product. 

That is,
\begin{equation} \label{eq:probeval} 
\Fep_n(1)=\begin{bmatrix} 1 & 0 & 0 & \cdots & 0 \end{bmatrix}
\left( A^n
\begin{bmatrix} 1 & 1 & 0 & 1 & 0 & 0 & \cdots & 0 \end{bmatrix}^T \right) \, .
\end{equation}
It follows that 
\begin{equation}
	\label{eq:matnorm} \Fep_{n+1}(1) \le C \|A^{n+1}\| \le C \|A^n\|, 
\end{equation}
for some constant $C$ (which may depend upon $\ep$).
%\footnote{Here we are using the usual matrix norm, with the matrix acting on $\mathbb R^{\frac{M \cdot (M-1)}{2}}$ equipped with the usual Euclidean norm.}

Let us consider our current position. We wish to show that for some small  $\ep > 0$, weak percolation does not occur (a.s.). The value $\Fep_{n+1}(1)$ is the probability that a fixed path of length $n$ is open. If we let $\mu_n$ denote the number of paths of length $n$ (starting at 0, say) in $\mathbb Z^2$, it is sufficient to  show 
\begin{equation} \label{eq:sufficient} 
 \lim_{n \to \infty} \; \mu_n \cdot \Fep_{n+1}(1) \, = \, 0 \,. 
\end{equation}
By sub-multiplicativity, $\mu_n^{1/n}$ is convergent; the limit is the \emph{connective constant},
$\lambda$. It is known (\cite{PT}) that $\lambda\le 2.679192495$.
%
%A precise estimate for $\mu_n$ is not known, but the following property will be useful. Since $\mu_{m+n} \le \mu_m \mu_n$, the limit 
%\begin{equation}\label{eq:connective}
%\lambda = \lim_{n \to \infty}  \mu_n^{1/n}	
%\end{equation}
%exists and equals $\inf_n \,  \mu_n^{1/n}$. The (often estimated, but still unknown) constant $\lambda$ is called the {\em connective constant} for $\mathbb Z^2$.  (A brief, interesting discussion of efforts to estimate $\lambda$ may be found in \cite{BR}, p. 17-19).
%To establish \eqref{eq:sufficient} it follows from \eqref{eq:matnorm} that it is sufficient to show $\lim_{n \to \infty} \; \mu_n \cdot \|A^n\|  = 0$.
%, which may be re-written as
%\begin{equation}\label{eq:almost}
%\lim_{n \to \infty} \; \left(\mu_n^{1/n} \cdot \|A^n\|^{1/n}\right)^n  = 0.	
%\end{equation} 
Recall that the {\em spectral radius} of $A$ is given by $\rho(A)=\lim_{n\to\infty}\|A^n\|^{1/n}$, 
Moreover $\rho(A)$ is just the maximum of the absolute values of the eigenvalues of $A$.
% satisfies {\em Gelfand's Formula}, namely, 
%\[ \rho(A)=\max\{|\eta|\colon \eta\text{ is an eigenvalue of $A$}\}.
%\]
%
Now if $\lambda \cdot \rho(A) < 1$
%. Then for all sufficiently large $n$,  
%$$
%\left(\mu_n^{1/n} \cdot \|A^n\|^{1/n}\right) < \gamma,
%$$
then \eqref{eq:sufficient} follows. Thus, to establish \eqref{eq:sufficient}  it is sufficient to find $\ep_0 > 0$ for which $\rho(A) <  0.373246 <  1/\lambda$. 
%, and from Gelfand's Formula it is sufficient to show that the modulus of any eigenvalue for $A$ is less than $1/\lambda$.  
Finally, by monotonicity,  weak percolation would fail a.s.\ for each $0 \le \epsilon \le \ep_0$.  

Using equations \eqref{eq:smallj} and \eqref{eq:bigj}, we may easily compute the matrix entries for $A$.  When $\epsilon_0=0.1481$, $A$ is a $28\times 28$ matrix whose spectral radius is approximately $\rho(A)\approx 0.373079$ so that $\lambda\cdot\rho(A)<1$ as required.
\end{proof}

\begin{proposition} \label{prop:sup-nice}
When $\epsilon$ is greater than $p_\text{site}$, the critical probability for the standard site percolation model
in $\Z^2$, the Lightning Model has positive probability of strong percolation.
\end{proposition}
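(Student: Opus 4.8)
The plan is to dominate the Lightning Model from below by a copy of standard site percolation. Declare a vertex $a\in\Z^2$ to be \emph{active} if $\phi_a<\ep$. Because the potentials $\{\phi_a\}$ are i.i.d.\ uniform on $[0,1]$, the indicators $\{\1{\{\phi_a<\ep\}}\}_{a\in\Z^2}$ are i.i.d.\ Bernoulli random variables with parameter $\ep$, so the set of active vertices has exactly the law of standard site percolation on $\Z^2$ at density $\ep$.

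The key observation is that an active vertex is automatically bi-directionally connected to each of its active neighbors: if $a$ and $b$ are neighbors with $\phi_a<\ep$ and $\phi_b<\ep$, then $\phi_b<\ep\le\phi_a+\ep$, so $(a,b)$ is $\ep$-open, and symmetrically $(b,a)$ is $\ep$-open. Hence if $C$ is any connected set of active vertices, then for any $u,v\in C$ one obtains an $\ep$-open path from $u$ to $v$ by following a path in $C$, and an $\ep$-open path from $v$ to $u$ by reversing it; so $u\lrep v$, and $C$ is contained in a single strongly-connected cluster of $\fep(\phi)$. In particular, every infinite active cluster is contained in an infinite strongly-connected cluster.

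Since $\ep>p_\text{site}$, with probability one the active-site process contains an infinite cluster, so with probability one there exists some $a\in\Z^2$ that percolates strongly. To finish I would invoke translation invariance: $\Pep$ is invariant under translations of $\Z^2$ (as $\lambda$ is, and $\fep$ commutes with translations), so $\Pep(\{a\text{ percolates strongly}\})$ is a constant $\theta$ independent of $a$. If $\theta=0$, then by countable subadditivity the event $\bigcup_{a\in\Z^2}\{a\text{ percolates strongly}\}$ would have probability $0$, contradicting the preceding sentence; hence $\theta>0$, which is precisely the assertion that strong percolation (at the origin) occurs with positive probability.

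This argument is short and I do not anticipate a serious obstacle; the only points needing a little care are verifying that the active-vertex indicators really are independent with marginal $\ep$ (immediate from the product structure of $\lambda$) and the final translation-invariance step, which is what lets us upgrade ``some vertex percolates strongly almost surely'' to the stated statement about the origin rather than merely asserting the existence of an infinite strongly-connected cluster. (One could alternatively phrase the whole thing through a coupling of $\Pep$ with site percolation, but the direct argument above suffices.)
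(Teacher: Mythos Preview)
Your argument is correct and essentially the same as the paper's. The paper chooses the complementary set $\{\phi_a>1-\epsilon\}$ rather than $\{\phi_a<\epsilon\}$, and at the end it directly invokes the site-percolation fact that the origin's open cluster is infinite with positive probability (so no translation-invariance step is needed), but these are cosmetic differences rather than a different approach.
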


\begin{proof}
For $\phi \in \V$ and $\ep > 0$ set $S = S_{\phi, \ep} = \{a \in \mathbb{Z}^2 : \phi_a>1-\epsilon\}$. By \ep-tolerance, if $a$ and $b$ are neighbors in $S$, then both edges $(a,b)$ and $(b,a)$ are present in $\fep(\phi)$.
In particular, if $C$ is a (non-directed) cluster in $S$,  then $C$ is contained in a single strongly connected component
of $\fep(\phi)$. 
	
Fix $\epsilon>p_\text{site}$.
Since the vertex potentials are independently uniformly distributed, each $a\in\Z^2$ belongs
to the random set $S$ with probability $\epsilon$ independently of all other vertices. 
Let $A$ be the set of vertex configurations such that $S$ is infinite. By standard site
percolation, $\lambda(A)>0$. By the previous paragraph, $\fep^{-1}\{0\lrep\infty\}
\supset A$, so that $\Pep(\{0\lrep\infty\})>0$ as required.
%
%Hence, from standard site percolation, there is a positive probability, in $\V$, that the non-directed 
%cluster in $S$ containing the origin is infinite. Let $A$ denote the collection of vertex
%configurations such that $S$ is infinite, so that $\lambda(A)>0$. 
%Then 

%That is, 
%$$
%\lambda\{\phi: \text{the non-directed cluster in $S$ containing 0 is infinite}\} > 0.
%$$
%If we denote this last event as $A$, since $A \subseteq \fep^{-1}(\fep(A))$, it follows that the strongly-connected cluster containing the
%origin in the Lightning Model is also infinite with positive probability, as required.
\end{proof}

Wierman (\cite{WI}) established that $p_{\text{site}} < 0.679492$, which therefore gives an upper bound for the critical threshold for strong percolation in the lightning model. 

%Clearly when $\epsilon=1$, the lightning model percolates strongly almost surely.
%One might ask whether there are values of $\epsilon$ strictly below 1 with this property. The answer is clearly no. For $\epsilon<1$, set $\eta=1-\epsilon$. There is positive probability that $\phi_0>1-\frac\eta 2$ while each of the four neighbors satisfy $\phi_a<\frac\eta 2$, so that $0\tep a$ but $a\not\tep 0$ for each of the neighbors. 

\section{Number of Infinite Components} \label{sec:numcom}

In this section, we show that for the Lightning Model,  the number of infinite (strong) 
clusters is almost surely 0, 1, or $\infty$. By the results in the previous section, for 
sufficiently large $\ep$ the Lightning Model strongly percolates, so in that case there is 
at least one cluster. Due to ergodic considerations, we know that the number of clusters 
is almost surely constant. However, we cannot at this time rule out the possibility that 
there are infinitely many infinite strong clusters.

\begin{definition}
For $\omega \in \E$, let $\Nom$ denote the number of infinite strong
clusters in the configuration $\omega$.
\end{definition}

We remark that $\Nom$ is a measurable function of $\omega$. 
For a natural number $n$, let $B_n$ denote $\{  (x,y) \in \Z^2 : \max\{|x|, |y|\} \le n\}$. 
For $r\in\N$ and $\omega\in\E$, which we think of a subgraph of $\overrightarrow{G}$,
the \emph{restriction} of $\omega$ to $B_r$, written $\omega|_{B_r}$, denotes the
induced subgraph of $\omega$ on the vertex set $B_r$.
For $\omega\in\E$ and $m<n<r$, let $C(m,n,r)(\omega)$ denote the number of clusters in  
$\omega|_{B_r}$ that intersect both $B_m$ and $B_n^c$. 
\begin{lemma}
	For $\omega\in\E$, the limits
	\begin{align*}
		&\lim_{r\to\infty}C(m,n,r)(\omega),\\
		&\lim_{n\to\infty}\lim_{r\to\infty}C(m,n,r)(\omega)\text{ and}\\
		&\lim_{m\to\infty}\lim_{n\to\infty}\lim_{r\to\infty}C(m,n,r)(\omega)
	\end{align*}
	all exist (the first for all $n>m>0$ and the second for all $m>0$).
	Also $N_\omega=\lim_{m\to\infty}\lim_{n\to\infty}\lim_{r\to\infty} C(m,n,r)(\omega)$,
	so that $\omega\mapsto N_\omega$ is measurable.
\end{lemma}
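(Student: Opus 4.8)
The plan is to establish the three limits in the stated order — first $r\to\infty$, then $n\to\infty$, then $m\to\infty$ — by exhibiting monotonicity in each variable, and then to identify the iterated limit with $N_\omega$. For fixed $m<n$ and $\omega\in\E$, I would first argue that $r\mapsto C(m,n,r)(\omega)$ is non-increasing. The point is that as $r$ grows we are looking at the same vertex set in $B_n$ but allowing more paths outside $B_n$: two clusters of $\omega|_{B_r}$ that both meet $B_m$ and $B_n^c$ can only merge (never split) when we pass to $\omega|_{B_{r+1}}$, since every edge of $\omega|_{B_r}$ is an edge of $\omega|_{B_{r+1}}$. A cluster counted in $C(m,n,r+1)$ that meets $B_m$ must contain a cluster of $\omega|_{B_r}$ meeting $B_m$, and (since it also meets $B_n^c$) at least one such sub-cluster meets $B_n^c$ as well, because any path realizing the bi-directional connection from a point of $B_m$ to a point of $B_n^c$ within $B_{r+1}$ must cross $\partial B_n$, and the relevant portion lies in $B_r$ once... — more carefully, I would phrase it as: the map sending a cluster of $\omega|_{B_r}$ (meeting $B_m$ and $B_n^c$) to the cluster of $\omega|_{B_{r+1}}$ containing it is well-defined and surjective onto the clusters counted by $C(m,n,r+1)$, which gives $C(m,n,r+1)\le C(m,n,r)$. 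Being a non-increasing sequence of non-negative integers, it converges; call the limit $D(m,n)(\omega)$.

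Next I would show $n\mapsto D(m,n)(\omega)$ is non-decreasing for fixed $m$. Increasing $n$ to $n+1$ makes the requirement ``meets $B_{n+1}^c$'' strictly harder than ``meets $B_n^c$'', so for each fixed large $r$ we have $C(m,n+1,r)\le C(m,n,r)$; but that's the wrong direction, so instead I would use the standard trick: a cluster of $\omega|_{B_r}$ meeting $B_m$ and $B_{n+1}^c$ also meets $B_n^c$, and two distinct such clusters stay distinct, giving $C(m,n+1,r)\le C(m,n,r)$ — hence $D(m,n+1)\le D(m,n)$? That too is the wrong monotonicity for the claimed iterated limit, so I must be careful. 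The correct statement (as in Burton–Keane) is that after taking $r\to\infty$ first, increasing $n$ can only \emph{increase} the count, because in the limit $r\to\infty$ a cluster of $\omega$ restricted to all of $\Z^2$ that meets $B_m$ and is infinite will be counted for every $n$, while finite-but-large clusters that reach $B_n^c$ but not $B_{n+1}^c$ drop out — wait, those would decrease it. The resolution I would adopt: show directly that $D(m,n)(\omega)$ counts the number of clusters of $\omega|_{B_n^{\text{(global connectivity)}}}$... Concretely, I would prove $D(m,n)(\omega) = $ the number of distinct strong clusters of $\omega$ (the full infinite graph) that intersect $B_m$ and intersect $B_n^c$, by a diagonal/compactness argument: any bi-infinite finite witness path stabilizes. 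Then monotonicity in $n$ is clear — a global cluster meeting $B_m$ and $B_{n+1}^c$ meets $B_n^c$, so $D(m,n)$ as $n$ grows... still decreasing. I would therefore instead define things so that $D(m,n)$ counts clusters of $\omega$ meeting $B_m$ whose intersection with $B_n^c$ is nonempty, and take $\lim_n$ to be the number meeting $B_m$ and \emph{unbounded}, which is an \emph{increasing} limit after recognizing that being unbounded is $\bigcap_n\{\text{meets }B_n^c\}$ and the count of clusters satisfying an intersection of a decreasing family of conditions is the increasing limit — I will make this precise via the identity that the number of global clusters meeting $B_m$ and unbounded equals $\lim_{n}(\text{number meeting }B_m\text{ and }B_n^c)$ once one notes a cluster meeting $B_m$ is unbounded iff it meets every $B_n^c$, combined with the fact that there are only finitely many global clusters meeting the finite set $B_m$ that are unbounded... hmm, that finiteness is exactly what needs the limit. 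I would handle this cleanly by the monotone-class bookkeeping of Burton–Keane rather than belaboring it here.

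Finally, $m\mapsto \lim_n\lim_r C(m,n,r)(\omega)$ is non-decreasing, since enlarging $B_m$ only adds more clusters that qualify (an unbounded global cluster meeting $B_m$ also meets $B_{m'}$ for $m'>m$), so the outermost limit exists in $\mathbb N\cup\{\infty\}$. The identification with $N_\omega$ then follows: $\lim_m$ of (number of unbounded global strong clusters meeting $B_m$) equals the total number of unbounded — equivalently infinite — strong clusters of $\omega$, because every infinite strong cluster meets some $B_m$ and each is counted once for all large $m$, while the monotone limit captures exactly the supremum of these finite counts. Measurability of $\omega\mapsto N_\omega$ is then immediate: each $C(m,n,r)(\omega)$ depends only on the finitely many edges in $B_r$, hence is measurable, and $N_\omega$ is a pointwise limit of measurable functions.

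The main obstacle is getting the directions of monotonicity right and, relatedly, justifying that taking $r\to\infty$ genuinely recovers global (whole-lattice) strong connectivity — i.e., that a bi-directional connection in $\omega$ between two fixed vertices is witnessed inside some finite $B_r$, and conversely that clusters counted in $\omega|_{B_r}$ for large $r$ are "almost" global clusters. Because strong connectedness in directed graphs is delicate (as the paper emphasizes, a single edge can shatter a strong cluster), I expect the care needed is precisely in the $r\to\infty$ step: one must check that the witnessing forward and backward paths, once they exist in $\omega$, lie in a common finite box, and that no "spurious" merging survives the limit. The rest is the routine observation that bounded monotone integer sequences converge, together with the countable-union measurability argument.
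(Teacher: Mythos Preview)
Your proposal has a genuine gap in the first step: the sequence $r\mapsto C(m,n,r)(\omega)$ is \emph{not} monotone in either direction, so the monotonicity argument you sketch cannot work. The surjectivity you claim fails. Suppose $a\in B_m$ and $b\in B_n^c\cap B_r$ lie in the same strong cluster of $\omega|_{B_{r+1}}$, but the bidirectional paths witnessing this pass through $B_{r+1}\setminus B_r$. Then in $\omega|_{B_r}$ the strong cluster of $a$ may fail to reach $B_n^c$ and the strong cluster of $b$ may fail to reach $B_m$; neither is counted by $C(m,n,r)$, yet their union (plus more) is counted by $C(m,n,r+1)$. So $C(m,n,r)$ can strictly increase with $r$. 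It can also decrease, by the merging mechanism you describe. The paper handles this by observing that the equivalence relations $\leftrightarrow_r$ on the finite set $B_n$ form an increasing chain (more paths become available as $r$ grows), and since there are only finitely many equivalence relations on $B_n$, they stabilize at some $r_0$; once they stabilize, $C(m,n,r)$ is constant for $r\ge r_0$. This eventual stabilization, not monotonicity, is what gives the limit, and it identifies $\lim_r C(m,n,r)(\omega)$ as the number of global strong clusters of $\omega$ meeting both $B_m$ and $B_n^c$.

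Your treatment of the $n$-limit is also tangled, though here you actually arrive at the correct inequality and then talk yourself out of it. You correctly derive $D(m,n+1)\le D(m,n)$: a global cluster meeting $B_m$ and $B_{n+1}^c$ certainly meets $B_n^c$. This is exactly the monotonicity the paper uses. A non-increasing sequence of non-negative integers converges; there is nothing ``wrong'' about this direction. The limit $\lim_n D(m,n)$ is then the number of global clusters meeting $B_m$ that meet every $B_n^c$, i.e., the number of infinite clusters meeting $B_m$. Your $m$-limit argument is fine and matches the paper.
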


\begin{proof}
	For now, let $n>m>0$ be fixed. 
	For $r>n$, let $\leftrightarrow_r$ be the equivalence relation on $B_n$ where $u\leftrightarrow_r v$
	if there is a directed path from $u$ to $v$ in $\omega|_{B_r}$ and a directed path from $v$ to $u$ in 
	$\omega|_{B_r}$. These equivalence relations are increasing. That is, if $u\leftrightarrow_r v$, then
	$u\leftrightarrow_{r'}v$ for all $r'>r$. Since there are finitely many equivalence relations on $B_n$,
	they stabilize at some $r_0>n$. From that point onwards, the sequence 
	$(C(m,n,r)(\omega))_{r\ge r_0}$ does
	not change.
	(Prior to this point, $C(m,n,r)(\omega)$ may increase as new
	 connections added in the outer layer can ensure that
	a cluster connects $B_m$ to $B_n^c$ (in a bidirectional way);
	or decrease as new connections added in the outer layer 
	can merge two previously existing clusters). It follows that
	$\lim_{r\to\infty}C(m,n,r)(\omega)$ exists. We see that
	$\lim_{r\to\infty}C(m,n,r)(\omega)$ is the number of clusters in $\omega$ that intersect both $B_m$ and
	$B_n^c$.
	
	Given this, it is clear that $\lim_{r\to\infty}C(m,n,r)(\omega)$ is a non-increasing function of $n$ (as if 
	a cluster intersects $B_{n'}^c$ for $n'>n$, then it intersects $B_n^c$), so that
	$\lim_{n\to\infty}\lim_{r\to\infty}C(m,n,r)(\omega)$ exists.
	This is the number of infinite clusters in $\omega$ that intersect $B_m$ as a cluster is infinite if and only if
	it intersects each $B_n^c$. 
	
	Finally we see $\lim_{n\to\infty}\lim_{r\to\infty}C(m,n,r)(\omega)$ is an increasing function of
	$m$, so the
	limit as $m$ approaches $\infty$ also exists, possibly taking the value $\infty$. 
	From the above, we see that 
	$\lim_{m\to\infty}\lim_{n\to\infty}\lim_{r\to\infty} C(m,n,r)(\omega)$ is the number
	of infinite clusters in $\omega$ as required.
	
	Since $C(m,n,r)(\omega)$ is a measurable function of $\omega$ (as it
depends on finitely many coordinates), it follows that $N_\omega$ is measurable as required.
\end{proof}

Here is the main theorem of this section: 

\begin{theorem} 
\label{no 2 infinite clusters}
For each $\epsilon>0$, there exists $k\in\{0,1,\infty\}$ such that for
$\Pep$-a.e.\ $\omega\in\E$, $\Nom=k$. 
\end{theorem}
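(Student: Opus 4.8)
The plan is to imitate the classical Newman--Schulman scheme: first use ergodicity to see that $\Nom$ is $\Pep$-a.s.\ a constant $k\in\{0,1,2,\dots\}\cup\{\infty\}$, and then rule out $2\le k<\infty$. For the first part, the $\Z^2$-translations $(T_v\phi)_a=\phi_{a+v}$ preserve the product measure $\lambda$ on $\V$ and act ergodically (in fact mixingly) there, being a shift of an i.i.d.\ family; since $\fep$ intertwines $T_v$ with the corresponding coordinate shift on $(\E,\Pep)$, the edge-configuration system is a factor of it and hence ergodic. The function $\omega\mapsto\Nom$ is measurable by the preceding lemma and is invariant under the shift, so it equals a constant $\Pep$-a.s.

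So assume, for contradiction, that $\Pep(\Nom=k)=1$ for some finite $k\ge 2$. Because each of the $k$ infinite clusters is nonempty, $\Pep(\text{all }k\text{ clusters meet }B_n)\to 1$ as $n\to\infty$, so we may fix $n$ for which the event $G$ that all $k$ infinite clusters meet $B_n$ has probability close to $1$. The goal is to perform a local surgery on a configuration in $G$ that merges the $k$ clusters, producing with positive probability a configuration with fewer than $k$ infinite clusters, which contradicts the a.s.\ constancy.

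The substitute for the missing finite-energy property is that the potentials $\{\phi_a\}$ are \emph{independent}: conditioning on the potentials outside a box $B_N$ leaves those inside $B_N$ i.i.d.\ uniform, so every nonempty open set of interior potential-assignments has positive conditional probability given the exterior, and the relevant percolation conditions (the strict inequalities $\phi_b<\phi_a+\ep$) are themselves open conditions. One then wants to specify, for $N\gg n$, an open set of interior assignments that on $G$ forces the merge. A natural construction is a ``spine'': a path of vertices in $B_N\setminus B_n$ whose consecutive potentials differ by less than $\ep$, so that every edge along it is open in both directions and the whole spine lies in a single strong cluster; steering the spine to pass next to a vertex of each of the $k$ clusters lying in $B_n$, with a matching potential (within $\ep$ --- note a single common band will not suffice, one needs a short gradient of length at most $\lceil 1/\ep\rceil$ between the relevant levels), splices all $k$ clusters bidirectionally onto the spine.

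The hard part --- and the step where being in two dimensions seems essential --- is controlling the \emph{global} effect of the surgery. Resampling the potentials in $B_N$ also alters every edge between $B_N$ and $B_N^c$, so a priori the surgery could sever the exterior portions of the old clusters from infinity or from the spine, shatter them into infinitely many finite pieces, or manufacture new infinite clusters. To handle this one should (i) phrase everything in terms of the strong clusters of the configuration obtained by keeping only edges with both endpoints in $B_N^c$, which are untouched by the surgery, (ii) choose the boundary potentials of $B_N$ so that the spine stays bidirectionally connected to the relevant exterior pieces, and (iii) invoke planarity --- a Jordan-curve separation argument in $\Z^2$ --- to ensure the exterior infinite pieces of the original clusters survive, are absorbed into the spine's cluster, and that no new infinite cluster is created. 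Step (iii) is the crux; it is precisely where the Burton--Keane apparatus offers no help, and it is the reason the argument is tied to two dimensions.
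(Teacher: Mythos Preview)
Your ergodicity argument for the almost-sure constancy of $N_\omega$ is correct and essentially identical to the paper's. The difficulty is entirely in the second half, and here your proposal has a genuine gap.

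You correctly diagnose the problem: resampling the potentials in $B_N$ alters every edge across the boundary $\partial B_N$, so a priori the exterior portions of the original $k$ clusters may be severed from infinity, and new infinite clusters may appear. You then defer the resolution to a vague ``Jordan-curve separation argument'' in step (iii), calling it ``the crux'' but not carrying it out. No such argument is supplied, and it is not clear that one exists along these lines: the spine construction you describe gives no control over what happens to edges \emph{outside} the spine but inside $B_N$, nor over the boundary edges, and planarity alone does not prevent the shattering you worry about.

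The paper circumvents the boundary problem by an entirely different surgery. Rather than resampling, it applies a deterministic layer-by-layer contraction $\Psi_n^\eta(\phi)_a=(1-\eta)^i\phi_a$ for $a$ in the $i$th concentric layer of $B_{2n}$, with $\eta=\log(1/\ep)/n$. Two facts drive the proof: first, this forces every potential in the central sub-box $B_n$ below $\ep$, so $B_n$ is fully bidirectionally connected; second --- and this is the key lemma --- with probability bounded below by some $\delta>0$ \emph{independent of $n$}, the map breaks \emph{no} edge anywhere: $\fep(\phi)\preceq\fep(\Psi_n^\eta(\phi))$. (The only edges at risk are outward edges in the outer $\lfloor 1/\ep\rfloor$ layers, each broken with probability $O(\eta)=O(1/n)$; a careful independence argument over slices shows the joint survival probability is $\Theta(1)$.) On the intersection of this event with the event that $B_n$ meets all $k$ clusters, the original clusters remain intact and now all contain $B_n$, hence merge into a single cluster $\mathcal C_*$.

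The paper then disposes of the ``new clusters created'' worry not by planarity but by a second modification $\widehat{\Psi_n^\eta}$ that applies $\Psi_n^\eta$ only at vertices of $\mathcal C_*$ and leaves the rest of $\phi$ untouched; any cluster of the resulting configuration either contains $\mathcal C_*$ or is disjoint from it, and in the latter case coincides with a (finite) cluster of the original $\phi$. A finite-energy style argument shows the image still has positive measure, yielding the contradiction. The two-dimensionality enters only in the $\delta$-lower bound (the number of at-risk edges scales like the perimeter, not the volume), not through any separation argument.
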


For the proof of Theorem \ref{no 2 infinite clusters}, we introduce a transformation on vertex configurations that modifies the potential values only for vertices in a large finite box centered the origin. The idea is that as a result of applying the transformation, a large sub-box (again centered at the origin) will be forced to be strongly connected in every configuration. 

For $n \in \N$, recall that $B_{2n}=\{(x,y)\in\Z^2\colon \max(|x|,|y|)\le 2n\}$. Within such a box, define the sequence of \emph{layers} $L_0, L_1, \ldots$, $L_{2n}$ by
\[ L_i = \lbrace (x,y) \in \mathbb{Z}^2 : \max (|x|,|y|) =2n-i \rbrace . 
\]

These are illustrated in Figure \ref{fig:layers}.

We now define our transformation. For $n \in \mathbb{N}$ and $\eta \in (0,1)$, define $\Ph^\eta: \V \to \V$ by
\[ \Ph^\eta (\phi)(a)=
\begin{cases}
(1-\eta)^i \phi(a) & \ \ \ \text{if $a \in L_i \,$;}\\
\phi(a) & \ \ \ \text{otherwise,}
\end{cases} \]
for any vertex configuration $\phi \in \V$ and vertex $a \in \mathbb{Z}^2$. The following lemma describes the useful properties of $\Ph^\eta$. 
\begin{lemma}\label{lem:PsiProps}
Let $\epsilon>0$ be given. The transformations $\Ph^\eta$ defined above have the following properties:
\begin{enumerate}
\item For each $n \in \N$ and $\eta \in (0,1)$, if $A\subset\V$ has positive measure, then $\Ph^\eta(A)$ also has positive measure. \label{it:positive}
\item Suppose $n > \log( \tfrac 1 \ep)$ and set $\eta = \log( \tfrac 1 \ep)/n$. For each $\phi\in \V$, $\fep(\Ph^\eta(\phi))$ has the property that\label{it:bicenter}
all of the edges within the central sub-box, $B_n$, are bidirectionally connected. In particular, the strongly connected component of the origin contains $B_n$. 
\item Suppose $n > \tfrac 1 \ep$ and let $\eta$ be as in \ref{it:bicenter}. There exists a universal constant $\delta>0$, independent of $\ep$ and $n$, so that the probability that
$\Ph^\eta$ doesn't break any edges is at least $\delta$. That is, \label{it:nobreak}
\[
\lambda\Big(\big\{\phi\in\V\colon \fep(\phi)\preceq \fep(\Ph^\eta(\phi))\big\}\Big)\ge\delta.
\]
\end{enumerate}
\end{lemma}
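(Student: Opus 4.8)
The plan is to verify the three properties in turn; \ref{it:positive} and \ref{it:bicenter} are short, and the work is in \ref{it:nobreak}.

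For \ref{it:positive}: $\Ph^\eta$ touches only the finitely many coordinates indexed by vertices of $B_{2n}$, and there it is the diagonal linear map sending the coordinate at $a\in L_i$ to $(1-\eta)^i$ times itself, with (positive, nonzero) Jacobian $c:=\prod_{i=0}^{2n}(1-\eta)^{i|L_i|}$. Splitting $\V=[0,1]^{B_{2n}}\times[0,1]^{\Z^2\setminus B_{2n}}$, Fubini shows that a positive-measure set $A$ has a positive-measure set of ``tail'' coordinates over which the $[0,1]^{B_{2n}}$-section of $A$ has positive Lebesgue measure; the linear change of variables multiplies each such section's measure by $c$, and Fubini again gives $\lambda(\Ph^\eta(A))=c\,\lambda(A)>0$. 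For \ref{it:bicenter}: every $a=(x,y)\in B_n$ has $\max(|x|,|y|)\le n$, hence lies in $L_i$ with $i=2n-\max(|x|,|y|)\ge n$, so $\Ph^\eta(\phi)(a)=(1-\eta)^i\phi(a)\le (1-\eta)^n<e^{-\eta n}=e^{-\log(1/\ep)}=\ep$, the strict inequality being $1-\eta<e^{-\eta}$. Thus for neighbours $a,b\in B_n$ we have $\Ph^\eta(\phi)(b)<\ep\le \Ph^\eta(\phi)(a)+\ep$, so $(a,b)$ and (by symmetry) $(b,a)$ are open in $\fep(\Ph^\eta(\phi))$; as $B_n$ is connected in $\overrightarrow{G}$ with every internal edge present in both directions, it lies in one strong component, which then contains $0$.

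For \ref{it:nobreak} the first step is to pin down which edges $\Ph^\eta$ can possibly \emph{break} (open in $\fep(\phi)$ but closed in $\fep(\Ph^\eta(\phi))$). If $u,v$ lie in a common layer $L_i$, or both outside $B_{2n}$, their potentials are rescaled by a common factor and the openness test $\phi_v<\phi_u+\ep$ is unchanged (indeed it weakens to $\phi_v<\phi_u+\ep(1-\eta)^{-i}$ in the same-layer case); if exactly one of $u,v$ lies in $B_{2n}$, that vertex lies in $L_0$ and neither potential moves. So a break requires $u,v$ in consecutive layers, and a direct check shows that when $u\in L_i$, $v\in L_{i+1}$ (edge pointing ``inward'') the openness test only weakens. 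Hence the only edges at risk are the ``outward'' ones, from $L_{i+1}$ to the adjacent outer layer $L_i$. For such an edge, with $u\in L_{i+1}$, $v\in L_i$, a short computation gives that it breaks exactly when $\phi_v\in\bigl[(1-\eta)\phi_u+\ep(1-\eta)^{-i},\ \phi_u+\ep\bigr)$; since $\phi_u,\phi_v$ are independent uniforms this has probability at most $\tfrac1{2\eta}\bigl(\eta-\ep((1-\eta)^{-i}-1)\bigr)_+^2\le \eta/2$, and it is $0$ once $(1-\eta)^{-i}\ge 1+\eta/\ep$.

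Summing this bound over all outward inter-layer edges finishes the proof. There are at most $Cn$ of them between any $L_{i+1}$ and $L_i$ for a universal $C$, and, using $(1-\eta)^{-i}\ge e^{i\eta}\ge 1+i\eta$ together with $n\eta=\log(1/\ep)$, only the layers with $i\le 1/\ep$ contribute, so the expected number of broken edges is at most $K:=\tfrac{Cn\eta}{2}\sum_{i\le 1/\ep}(1-\ep i)_+^2$, a quantity bounded independently of $n$ (each factor $n$ being absorbed by $\eta\sim 1/n$). Because the breaking events of vertex-disjoint edges are independent, each candidate edge has breaking probability $\le\eta/2<\tfrac12$, and each vertex lies in only boundedly many candidate edges, a standard argument (grouping into $O(1)$ pairwise-disjoint classes, or the Lov\'asz Local Lemma) turns $\sum_e p_e\le K$ into $\lambda\bigl(\{\phi:\fep(\phi)\preceq\fep(\Ph^\eta(\phi))\}\bigr)\ge\delta>0$ with $\delta$ depending only on $K$, hence not on $n$. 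The main obstacle is precisely this last estimate: the crude bound $p_e\le\eta/2$ summed over the $\Theta(n/\ep)$ candidate edges only gives $\sum_e p_e=O(\log(1/\ep)/\ep)$, which is too large for a bare union bound, so one genuinely needs both the geometric truncation coming from the $\ep((1-\eta)^{-i}-1)$ term (it kills all but $O(1/\ep)$ layers) and a product/independence argument; and obtaining a $\delta$ that is uniform in $\ep$ as well requires extracting the further saving that a broken outward edge forces $\phi_v\gtrsim\ep(1-\eta)^{-i}$, which is very unlikely for the deeper layer-pairs.
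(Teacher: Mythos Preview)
Parts \ref{it:positive}, \ref{it:bicenter}, and the setup of \ref{it:nobreak} (only outward edges in the outer $\lfloor 1/\epsilon\rfloor$ layers can break; each with probability at most $\eta/2$) match the paper exactly, and your sharper per-edge estimate $\tfrac{1}{2\eta}\big(\eta-\epsilon((1-\eta)^{-i}-1)\big)_+^2$ is correct though not needed. The difference is in the last step. The paper gives an explicit geometric decomposition of the outer $\lfloor 1/\epsilon\rfloor$ layers into four $\lfloor 1/\epsilon\rfloor\times\lfloor 1/\epsilon\rfloor$ corners (handled by a union bound) and four sides, each side further split into $\Theta(n)$ vertex-disjoint radial ``slices'' of length $\lfloor 1/\epsilon\rfloor$; since distinct slices and corners use disjoint potentials, full independence yields the product lower bound $\big(1-\eta/(2\epsilon^2)\big)^4\big(1-\eta/(2\epsilon)\big)^{16n}\to\epsilon^{8/\epsilon}$. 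Your Lov\'asz Local Lemma route is a legitimate alternative and gives a comparable bound (the dependency graph has bounded degree and each $p_e\le\eta/2\to 0$, so the asymmetric LLL with $x_e\approx 2p_e$ gives $\prod_e(1-x_e)\ge e^{-cK}$). However, your other alternative, ``grouping into $O(1)$ pairwise-disjoint classes'', does not work as stated: colouring the candidate edges into finitely many vertex-disjoint classes gives independence \emph{within} each class, but the no-break events for different classes are neither independent nor visibly positively associated (the breaking condition is not monotone in the potentials), so the per-class bounds cannot simply be multiplied. You are right that neither argument delivers the stated $\epsilon$-independence of $\delta$: the paper's own bound $\epsilon^{8/\epsilon}$ is uniform only in $n$, which is all that the application in Theorem~\ref{no 2 infinite clusters} actually uses.
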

\begin{figure}[h]
	\label{layers}
	\caption{Layers $L_0, L_1, \ldots$ in the case $n = 5$.}
	\centering
	\includegraphics[width=0.8\textwidth]{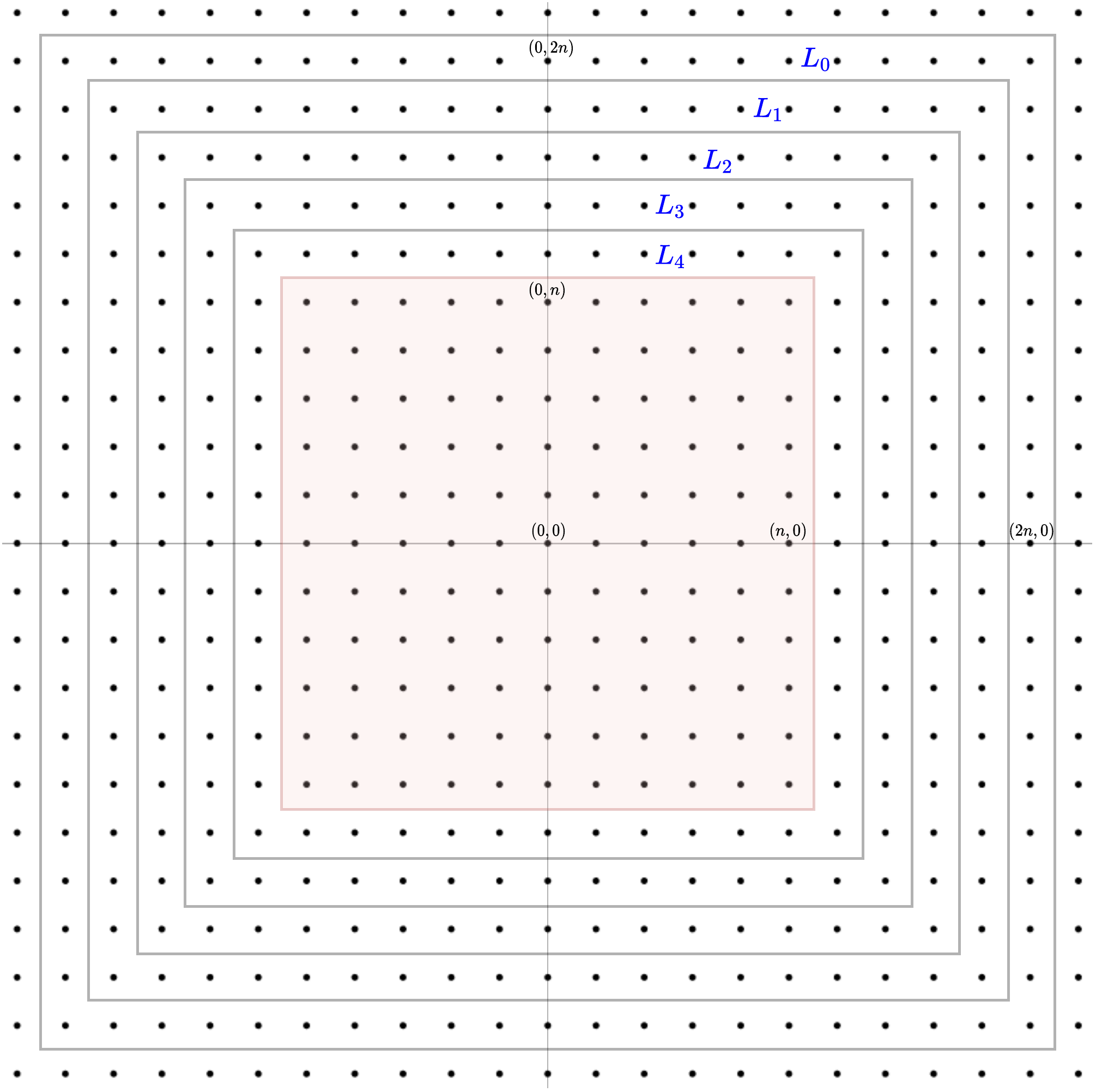}\label{fig:layers}
\end{figure} 
%\noindent {\bf Remarks:} \begin{enumerate}
%	\item In practice, when applying $\Ph^\eta$, we will be considering a large box with $n \gg \tfrac 1 \ep$, so  the restrictions for $n$ in Conditions \ref{it:bicenter} and \ref{it:nobreak} will not be a concern. However it is essential that $\delta$ is positive, and the value for $\eta$ defined in terms of $n$ and $\ep$ in Condition \ref{it:bicenter}  is chosen to allow us to make the necessary estimates here and in Theorem \ref{no 2 infinite clusters}. 
%
%\end{enumerate}
\begin{proof}[Proof of Lemma \ref{lem:PsiProps}:]
Condition \ref{it:positive} follows since $\Ph^\eta$ is simply scaling the values of a finite number of coordinates. 

To establish condition \ref{it:bicenter}, we show that $(1-\eta)^n<\ep$. This ensures that for all $\phi\in\V$,
the potential values of $\Ph^\eta(\phi)$ at vertices in $B_n$ are all less than $\epsilon$. By the argument
in Proposition \ref{prop:sup-nice}, this ensures that the edges contained in $B_n$ are bidirectionally
connected in $\fep(\Ph^\eta(\phi))$.

Notice that $(1-\eta)^n<\ep$ follows immediately from the fact that for $\eta \in (0, 1)$, $1-\eta <  e^{-\eta}$
and hence $(1-\eta)^n <  e^{-n\eta}=\epsilon$. 

We move to the proof of \ref{it:nobreak}. Define\footnote{This order induces the partial order $\preceq$ on edge configuations in the obvious way.}  an order $\sqsubseteq$ on the set $\{\, \raisebox{2pt}{\tikz\draw[black,fill=black] (0,0) circle (.3ex);}\,,\rightarrow\}$  by \; 
$ \raisebox{2pt}{\tikz\draw[black,fill=black] (0,0) circle (.3ex);}\, \sqsubseteq \, \raisebox{2pt}{\tikz\draw[black,fill=black] (0,0) circle (.3ex);}\,\text{, } 
\rightarrow\ \sqsubseteq\  \rightarrow \text{, and }
\, \raisebox{2pt}{\tikz\draw[black,fill=black] (0,0) circle (.3ex);}\, \sqsubseteq \ \rightarrow \;$. Let $a, b \in \mathbb{Z}^2$ be any two adjacent vertices, and let $\phi \in \V$ be some vertex configuration. We say that the edge $e=(a,b)$ is \emph{not broken} by $\Ph^\eta$ if $\fep(\phi)_e \sqsubseteq \fep(\Ph^\eta(\phi))_e$.

First notice that if $a,b \in \mathbb{Z}^2$ are adjacent vertices in the same layer $L_i$, then the edge $(a,b)$ is never broken by $\Ph^\eta$. This is because if both vertex values are scaled by the same positive value, then the difference between them will be scaled by that value as well, and the scaling is a contraction. It is worth noting  that this may create new open edges, something we will need to consider later in the proof of Theorem \ref{no 2 infinite clusters}. 

If $a$ and $b$ are adjacent vertices such that $a\in L_i$ and $b\in L_{i+1}$ (i.e. $b$ is 
closer to the origin), we refer to the edge $(a,b)$ as an \emph{inwards} edge (similarly
$(b,a)$ is an \emph{outwards} edge).  
We claim that if $(a,b)$ is an inwards edge and $a\to b$ in $\fep(\phi)$, then $a\to b$ in 
$\fep(\Ph^\eta(\phi))$. Since $a\to b$ in $\fep(\phi)$, we have $\phi(b)\le \phi(a)+\epsilon$. 
This implies $\Ph^\eta(\phi)(b)=(1-\eta)^{i+1}\phi(b)\le (1-\eta)^{i+1}\phi(a)+(1-\eta)^{i+1}\epsilon
\le (1-\eta)^i\phi(a)+\epsilon=\Ph^\eta(\phi)(a)+\epsilon$, so that $a\to b$ in $\fep(\Ph^\eta(\phi))$ also.

We now know that the only edges that might not be preserved by $\Ph^\eta$ are the outwards edges.
We will narrow this down even further, proving that only edges in an outer ``ring" near the boundary of $B_n$ might not be preserved. This will be useful because it represents a small quantity of the total number of edges in $B_n$. 

\begin{claim}
Let $e = (a,b)$ be an outwards edge with $a \in L_{i+1}$ and $b \in L_i$ and $a\to b$
in some configuration $\fep(\phi)$. If $i \ge \frac{1}{\epsilon}$ 
then $a\to b$ in $\fep(\Ph^\eta(\phi))$.

That is, for $i\ge \frac 1\epsilon$, no edges from level $i+1$ to level $i$ are broken. 
\end{claim}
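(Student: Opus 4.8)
The plan is to unwind the definitions and reduce the claim to a single scalar inequality relating $(1-\eta)^i$, $\epsilon$ and $\eta$. Since $a\in L_{i+1}$ and $b\in L_i$, we have $\Ph^\eta(\phi)(a)=(1-\eta)^{i+1}\phi(a)$ and $\Ph^\eta(\phi)(b)=(1-\eta)^i\phi(b)$, while the hypothesis $a\to b$ in $\fep(\phi)$ says exactly that $\phi(b)<\phi(a)+\epsilon$. So what must be shown is $(1-\eta)^i\phi(b)<(1-\eta)^{i+1}\phi(a)+\epsilon$. Using the hypothesis, $(1-\eta)^i\phi(b)<(1-\eta)^i(\phi(a)+\epsilon)$; then writing $(1-\eta)^i\phi(a)=(1-\eta)^{i+1}\phi(a)+(1-\eta)^i\eta\,\phi(a)$ and bounding $\phi(a)\le 1$, the right-hand side is at most $(1-\eta)^{i+1}\phi(a)+(1-\eta)^i(\eta+\epsilon)$. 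Hence it suffices to prove the scalar inequality
\[(1-\eta)^i(\eta+\epsilon)\le\epsilon,\]
equivalently $(1-\eta)^i\le\epsilon/(\epsilon+\eta)$.

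For this I would use two elementary estimates: $1-\eta\le e^{-\eta}$ (already invoked in the proof of part \ref{it:bicenter}) and $\log(1+x)\le x$ for $x>0$. From the first, $(1-\eta)^i\le e^{-\eta i}$, so it is enough that $e^{-\eta i}\le\epsilon/(\epsilon+\eta)$, i.e.\ $\eta i\ge\log(1+\eta/\epsilon)$. By the second estimate $\log(1+\eta/\epsilon)\le\eta/\epsilon$, so it is enough that $\eta i\ge\eta/\epsilon$, i.e.\ $i\ge 1/\epsilon$ — which is precisely the hypothesis of the claim. Assembling the chain of inequalities then gives $\Ph^\eta(\phi)(b)<\Ph^\eta(\phi)(a)+\epsilon$, which is by definition the statement that the edge $(a,b)$ is open in $\fep(\Ph^\eta(\phi))$.

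No serious obstacle is expected: this is a short computation once one notices that the right quantity to control is $(1-\eta)^i(\epsilon+\eta)$ and that the hypothesis $i\ge 1/\epsilon$ is exactly what makes the logarithmic bound close. The only points needing a little care are keeping the final inequality strict — automatic, since $(1-\eta)^i>0$ and the hypothesis $\phi(b)<\phi(a)+\epsilon$ is strict — and noting that $\eta=\log(1/\epsilon)/n>0$ so that $\log(1+\eta/\epsilon)\le\eta/\epsilon$ applies; both hold under the standing assumptions $n>1/\epsilon$ and $\epsilon\in(0,1)$.
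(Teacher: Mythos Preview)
Your argument is correct and follows essentially the same route as the paper: both reduce the claim to the scalar inequality $(1-\eta)^i\le \epsilon/(\epsilon+\eta)$ after writing $(1-\eta)^i\phi(a)=(1-\eta)^{i+1}\phi(a)+\eta(1-\eta)^i\phi(a)$ and using $\phi(a)\le 1$. The only cosmetic difference is the elementary inequality used at the end---the paper invokes the Bernoulli-type bound $\tfrac{1}{(1-\eta)^i}>1+i\eta$, whereas you use $1-\eta\le e^{-\eta}$ together with $\log(1+x)\le x$; both collapse to the hypothesis $i\ge 1/\epsilon$.
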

\begin{proof}[Proof of Claim:] 
By assumption $\phi_b<\phi_a+\epsilon$ and $i\ge 1/\epsilon$.
We simply need to verify that $(1-\eta)^i\phi_b<(1-\eta)^{i+1}\phi_a+\epsilon$. 

We compute
\begin{align*}
(1-\eta)^i\phi_b&<(1-\eta)^i\phi_a+(1-\eta)^i\epsilon\\
&=(1-\eta)^{i+1}\phi_a+\epsilon+\eta(1-\eta)^i\phi_a-\epsilon(1-(1-\eta)^i)\\
&=(1-\eta)^{i+1}\phi_a+\epsilon+(1-\eta)^i\left[\eta \phi_a-\epsilon \left(\frac 1{(1-\eta)^i}-1\right)\right].
\end{align*}

Since $\frac 1{1-\eta}>1+\eta$, we see $\frac 1{(1-\eta)^i}>1+i\eta$ so that
$\eta \phi_a-\epsilon\big(\frac{1}{(1-\eta)^i}-1\big)<\eta\phi_a-\epsilon i\eta=\eta(\phi_a-\epsilon i)<0$.
Hence, the term displayed above in square brackets is negative, so that
\[
(1-\eta)^i\phi_b<(1-\eta)^{i+1}\phi_a+\epsilon,
\]
as required.
\end{proof}

There is now a very small collection of edges which might not be preserved, the outwards ($L_{i+1} \to L_i$) edges in the outermost $\frac{1}{\epsilon}$ layers ($i$ ranging between  0 and $\floor{\frac{1}{\ep}}$). We move to estimating the probability that they might be broken. 

We call an edge between a vertex in $L_1$ and a vertex in $L_0$ an \emph{outer} edge. We begin by obtaining an upper bound on  the probability that an outer edge will not  be preserved, and then use this to get an upper bound for the probability that any edge is not preserved.

\begin{claim}
Let $e=(a,b)$ be an outer edge. 
$\Pep(e \text{ is broken}) \leq \frac{\eta}{2}$.
\end{claim}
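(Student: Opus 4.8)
The plan is to make the breaking event explicit in terms of the two potential values involved and then integrate. Fix the outer edge $e=(a,b)$ with $a\in L_1$ and $b\in L_0$. Whether $(a,b)$ is open in a configuration $\fep(\phi)$ depends only on $\phi_a$ and $\phi_b$, and $\Ph^\eta$ rescales the potential at $a\in L_1$ by $(1-\eta)^1=1-\eta$ while leaving the potential at $b\in L_0$ unchanged (its scaling factor being $(1-\eta)^0=1$). Hence $e$ is open in $\fep(\phi)$ iff $\phi_b<\phi_a+\ep$, and $e$ is open in $\fep(\Ph^\eta(\phi))$ iff $\phi_b<(1-\eta)\phi_a+\ep$. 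Therefore the set of $\phi\in\V$ for which $e$ is broken (open before, closed after applying $\Ph^\eta$) is exactly
\[\big\{\phi\in\V\colon (1-\eta)\phi_a+\ep\le\phi_b<\phi_a+\ep\big\}.\]
(If one instead reads ``outer edge'' as the inwards edge from $L_0$ to $L_1$, the argument already given shows inwards edges are never broken, so there is nothing to prove; thus we may treat the outwards case.)

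Next I would bound the measure of this set by integrating over $\phi_a$ first. The potentials $\phi_a$ and $\phi_b$ are independent and uniform on $[0,1]$, so by Fubini
\[\Pep(e\text{ is broken})=\int_0^1 \big|\,[(1-\eta)x+\ep,\ x+\ep)\cap[0,1]\,\big|\;dx,\]
where $|\cdot|$ denotes length. For each $x\in[0,1]$ the interval $[(1-\eta)x+\ep,\ x+\ep)$ has length $\eta x$, and intersecting it with $[0,1]$ can only shrink it, so the integrand is at most $\eta x$. Consequently
\[\Pep(e\text{ is broken})\le\int_0^1\eta x\,dx=\frac{\eta}{2},\]
which is the asserted bound.

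This is essentially a one-line computation, so I do not anticipate a genuine obstacle. The only points needing care are the bookkeeping — keeping straight that it is the $L_1$ endpoint, not the $L_0$ endpoint, whose potential is contracted, so that the threshold in the second inequality drops by exactly $\eta\phi_a$ and not by something larger — and the (harmless) intersection with $[0,1]$, which never enlarges the relevant set and so costs nothing in the inequality.
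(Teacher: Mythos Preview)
Your proof is correct and follows essentially the same approach as the paper: both identify the breaking event as the region $\{(s,t)\in[0,1]^2: (1-\eta)s+\ep\le t<s+\ep\}$ and bound its area by $\tfrac{\eta}{2}$. The only cosmetic difference is that the paper drops the clipping to $[0,1]$ and recognizes the unclipped region as a triangle with vertices $(0,\ep)$, $(1,1+\ep)$, $(1,1-\eta+\ep)$, whereas you compute the same area by Fubini with vertical slices of length at most $\eta x$.
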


\begin{proof}[Proof of Claim:]
Let $a \in L_1$ and $b \in L_0$ be neighbors in $\Z^2$. The probability that the edge $(a,b)$ is not preserved by $\Ph^\eta$ is the probability that the following two inequalities hold:
\begin{align*}
\phi_a &> \phi_b - \epsilon;\\
\Ph^\eta(\phi)(a) &\not> \Ph^\eta(\phi)(b) - \epsilon.
\end{align*}
Let $s=\phi_a$ and $t=\phi_b$. 
The above conditions can then be written as
\begin{equation}\label{eq:constr}
\begin{split}
t &< s + \epsilon;\\
t &\geq (1-\eta) s + \epsilon.
\end{split}
\end{equation}
The probability we want is the area of the region in $[0,1]^2$ defined by these two inequalities. While the exact value is slightly messy to compute, we may obtain a useful overestimate if we allow $t$ to extend to its maximum value for $s \in [0,1]$. This region is a triangle with vertices at $(0,\epsilon)$, 
$(1,1+\epsilon)$ and $(1,1-\eta+\epsilon)$, so it has width 1 and height $\eta$, and therefore
area $\frac\eta 2$. Hence, for an outer edge $e$, $\Pep(e \text{ is broken})\le\frac\eta 2$, as claimed. \end{proof}

%\begin{figure}[h!]
%\label{layers}
%\caption{Sample Graph of Area Between Inequalities. Here the $x$-axis represents values of $\phi(x)$ and the $y$-axis represents values of $\phi(y)$.}
%\centering
%\includegraphics[width=0.5\textwidth]{etaOver2}
%\end{figure}

%
%
%we see that the right endpoints of these two lines achieve the values $\epsilon + 1$ and $\epsilon + 1 - \eta$ at the value $s=1$. Thus, the area of the triangle shown is $\frac{\eta}{2}$. The actual probability that both these inequalities are satisfied is the intersection of the triangular region shown and $[0,1]^2$, which will certainly be less than or equal to $\frac{\eta}{2}$.

We now move to the case of a general outwards edge from $L_{i+1} \to L_i$, for $1 \le i \le \floor{ \frac{1}{\ep}}$. 
In order for an edge $(a,b)$ with $a\in L_{i+1}$ and $b\in L_i$ to not be preserved, the constraints
\eqref{eq:constr} become
\begin{align*}
t &< s +\epsilon;\\
t &\geq (1-\eta) s + (1-\eta)^{-i}\epsilon.
\end{align*}
Since the lower bound has been increased (as $\frac1{1-\eta}>1$),
 we see that the area of the set of solutions is smaller than it
was for the outer edges from $L_1$ to $L_0$. Hence, for a general outward edge between $L_{i+1}$ and $L_i$, the probability that it is broken is less than $\eta/2$.

Summarizing: we know that the only edges which might be broken are outwards edges in  the outer $\floor{\frac{1}{\ep}}$ layers, and that the probability that any one of them is broken is less than $\eta/2$.

\begin{comment} 
Jim: this next paragraph is useful but slightly redundant. Also not clear what it's saying. Edit later. 
We are now ready to estimate the probability that $\Ph^\eta$ does break any connections in $B_n$. We know now that the only edges which have a possibility of being broken are outwards edges in the outer $\lfloor 
\frac{1}{\epsilon}\rfloor$ layers. That is, edges $(x,y)$ in a vertex configuration $\phi \in \V$ where $x \to y$, $x \in L_{i+1}$, and $y \in L_i$ where $i<\frac{1}{\epsilon}$.
\end{comment} 

We move next to calculating the probability that no edge is broken by $\Ph^\eta$. To do so, we consider separately the {\em corners} and {\em sides} of the outermost \fle layers of $B_n$.  Let $C_1,\ldots,C_4$ denote the four corners of $B_n$ of size $\lfloor \frac 1\epsilon\rfloor
\times \lfloor \frac 1\epsilon\rfloor$ and let $S_1,\ldots,S_4$ denote the sides of $B_n$, that is,
the regions of size $\lfloor \frac1\epsilon\rfloor\times (4n+1-2\lfloor\frac1\epsilon\rfloor)$ and
$(4n+1-2\lfloor\frac1\epsilon\rfloor)\times\lfloor \frac1\epsilon\rfloor$ between the corners. These
regions contain the only edges that could be broken by applying $\Ph^\eta$. 
%These subsets of $\mathbb{Z}^2$ (as a function of $n$) are illustrated below.

%\begin{figure}[h!]
%\label{layers}
%\caption{Corners $C_1$, $C_2$, $C_3$, and $C_4$ for a fixed $n$.}
%\centering
%\includegraphics[width=0.5\textwidth]{corners}
%\end{figure}
%
%\begin{figure}[h!]
%\label{layers}
%\caption{Sides $S_1$, $S_2$, $S_3$, and $S_4$ for a fixed $n$.}
%\centering
%\includegraphics[width=0.5\textwidth]{sides}
%\end{figure}

Because there are fewer than $\frac 1{\epsilon^2}$ outward edges in each corner, and the the probability that any given 
outward edge is broken is at most $\frac\eta2$, the probability that some edge is broken is bounded above by $\frac{\eta}{2\ep^2}$ and hence the probability that {\em no} edge is broken in any given corner by $\Ph^\eta$ is at least $1-\frac\eta{2\epsilon^2}$.

Next, we move to the sides. We intend to show that within each side, the probability that no edge is broken by $\Ph^\eta$ is at least $(1-\frac\eta{2\epsilon})^{4n}$. For this estimate we cannot simply use the union bound as we did for the estimate in the corners; doing this naively gives an upper bound for the probability that an 
edge is broken which is greater than 1! The difficulty we face is the built-in dependence of the edges in the lightning model: while the vertex potential values are independent, the existence of an edge $a\to b$ affects the probability of an edge $b\to c$. We want to use independence in some fashion, however. To obtain our desired estimate, we split each side into disjoint pieces for which the event that some edge is broken in
one piece is independent of the event that some edge is broken in another piece. 

Decompose each side $S_i$ into $4n+1-2\lfloor \frac 1\epsilon\rfloor$ disjoint {\em slices}, that is, 
outward paths of length $\lfloor 1/\epsilon\rfloor$ which have one vertex in each 
of the layers $L_0, L_1, \ldots L_{\floor{1/\epsilon}}$. The union of these slices contain all of the outward 
edges in $S_i$. In each slice, the union bound implies the probability than some edge is broken is at most 
$\frac{\eta}{2\epsilon}$, hence the probability that no edge is broken in a given slice
is at least $1-\frac{\eta}{2\epsilon}$. For each slice, consider the event that no edge is broken. These events are independent. Thus, we see the probability that no edge is broken in $S_i$ is at least 
$(1-\frac\eta{2\epsilon})^{4n+1-2\lfloor\frac 1\epsilon\rfloor}$.

Since the $C_i$'s and the $S_j$'s are mutually disjoint, the above argument shows that the probability that 
no edges are broken in any of the corners or sides is at least
\[
\left(1-\frac\eta{2\epsilon^2}\right)^4
\left(1-\frac\eta{2\epsilon}\right)^{16n}.
\]
Recalling that $\eta=(\log\frac 1\epsilon)/n$, the probability that no edges are broken is at least
\[
\left(1-\frac{\log\frac1\epsilon}{2n\epsilon^2}\right)^4
\left(1-\frac{\log\frac 1\epsilon}{2n\epsilon}\right)^{16n}.
\]
Since $\epsilon$ is assumed to be fixed, then by using the well-known result that $(1+\frac an)^n\to e^a$ we see
that, for large $n$, the lower bound for the probability that no edges are broken approaches
\[
e^{-8\log\frac{1}{\epsilon}/\epsilon}=\epsilon^{8/\epsilon},
\]
so that the probability that there no edges are broken is bounded below uniformly in $n$.
\end{proof}

\begin{proof}[Proof of Theorem \ref{no 2 infinite clusters}]
Fix $\epsilon > 0$ so that strong percolation occurs. By ergodicity, $N_\omega$ is constant, $\mathbb P_\epsilon$-almost surely.
Suppose for a contradiction that there are almost surely exactly $k$ infinite strongly-connected components with $k\in\N\setminus\{1\}$. 
	
Suppose $n > \log \tfrac 1 \ep$ and let $\delta>0$ be as given in Lemma \ref{lem:PsiProps} so that for all such $n$,
\[\lambda\Big(\big\{\phi\in\V\colon \fep(\phi)\preceq \fep(\Ph^\eta(\phi))\big\}\Big)\ge\delta.
\]

By continuity of measures, we may choose $N>0$
so that for any $n \ge N,$
\[
\lambda\left(\{\phi \in \V\colon \text{$B_n$ intersects each infinite component in $\fep(\phi)$}\}\right) > 1-\delta. 
\]

Now fix  $n \ge \max \{\log \tfrac 1 \ep , N\}$ and consider the two events whose probabilities are given in the previous two inequalities. Since the sum of the probabilities is larger than 1, the intersection $E$  of these two events has positive probability. Since the image under  $\Ph^\eta$ of a set of positive measure still has positive measure (Condition \ref{it:positive} of Lemma \ref{lem:PsiProps}), we have $\lambda(\Ph^\eta(E))>0$.   

Suppose $\phi \in E$. Then $\fep(\phi)$ has $k$ infinite clusters, and the box $B_n$ must contain vertices from each of them.   Additionally, any edge of $\fep(\phi)$ will still be present in $\fep(\Ph^\eta(\phi))$ (possibly becoming bidirectional).  
It follows that in $\fep(\Ph^\eta(\phi))$, the $k$  infinite strongly-connected clusters originally present in $\fep(\phi)$ are contained in a single infinite cluster, which we will denote by $\mathcal C_*(\phi)$. 

It appears, then, that what we have created is a set of vertex configurations of positive measure ($\Ph^\eta(E)$) for which the resulting edge configurations have a single infinite cluster. However, in directed percolation it is possible to modify finitely many edges and create an infinite strong cluster where there was none before; see example \ref{ex:inffromfin} below. 

Hence, it is conceivable that $k-1$ additional infinite clusters
were created during the modification process, which would avoid the contradiciton that we seek.

We can deal with this issue by defining a new map $\widehat{\Ph^\eta}$ by
\[
\widehat{\Ph^\eta}(\phi)_a=\begin{cases}
\left(\Ph^\eta(\phi)\right)_a&\text{if $a\in \mathcal C_*(\phi))$;}\\
\phi_a&\text{otherwise.}
\end{cases}
\]

%One way of thinking of this is the following:  look at the changes resulting from applying $\Ph^\eta$ to $\phi$, and see which vertices end up in $\mathcal C_*(\phi)$. If a vertex $a$ does not end up in  $\mathcal C_*(\phi)$, we change its value back to its original value $\phi_a$. If a vertex does end up in $\mathcal C_*(\phi)$, we leave its value as modified by $\Ph^\eta$. 

Since there are only finitely many possibilities for $\mathcal C_*(\phi)\cap B_n$, and all of them depend measurably on $\phi$, there is a fixed set $\Lambda\subset B_n$ such that $\mathcal C_*(\phi)\cap
B_n=\Lambda$ for all $\phi$ in a positive measure subset $A$ of $E$.  A {\em finite energy} argument similar to that given in the proof of Lemma \ref{lem:PsiProps}\eqref{it:positive} then shows that $\widehat{\Ph^\eta}(A)$ is of positive measure.  

Our final claim is that for any $\phi\in A$, $\widehat{\Ph^\eta}(\phi)$ has a unique infinite strongly connected cluster. To see this, let $\mathcal C$ be any cluster for the edge configuration $\fep(\widehat{\Ph^\eta}(\phi))$. 
If $\mathcal C$ intersects $\mathcal C_*(\phi)$, then $\mathcal C\supset\mathcal C_*(\phi)$
(note they may not be equal since they are generated from different potentials: $\widehat{\Ph^\eta}(\phi)$
and $\Ph^\eta(\phi)$). On the other hand, if $\mathcal C$ does not intersect $\mathcal C_*(\phi)$, then
the restriction of $\fep(\widehat{\Ph^\eta}(\phi))$ to $\mathcal C$ is the same as the restriction of $\fep(\phi)$
restricted to $\mathcal C$, so that $\mathcal C$ is a finite cluster. We have shown that for each $\phi \in A$, any cluster in $\fep(\widehat{\Ph^\eta}(\phi))$ is either contained in $\mathcal C_*(\phi)$ or is finite. Since $A$ has positive measure, this contradicts our original assumption that there were exactly $k$ infinite strong components almost surely, and we are done. 
\end{proof}

We remark that the proof in this section is essentially two-dimensional. 
Lemma \ref{lem:PsiProps}, part \ref{it:bicenter} makes essential use of the fact that 
$\eta=\Theta(1/n)$: this guarantees that after applying
$\Psi$, the central block is fully connected. On the other hand, Lemma \ref{lem:PsiProps}, part
\ref{it:nobreak} requires
that $\eta=O(1/n)$: this part ensures that no edges are broken when the potential is transformed
by $\Psi_n^\eta$. The proof works by showing that the only edges potentially broken are those within 
$1/\epsilon$ of the edge. In two dimensions, there are $O(n)$ edges that are potentially broken
and each has a probability of the order of $\eta$ of breaking, allowing us to show that the probability
that no edges are broken is $\Theta(1)$.

\section{Open Problems}\label{sec:open}

It is natural to ask whether one can rule out the case of infinitely many infinite strong clusters. 
The Burton-Keane theorem \cite{BK} is a well known approach to this in the case of undirected graphs. 
The following examples show that some things behave quite differently when dealing with directed graphs.

\begin{example} \label{ex:inffromfin}{\bf Creating an infinite strong cluster by modifying a single edge:}
Consider an edge configuration with edges only on two parallel lines, corresponding to, say, the lines $y = 1$ and $y = 2$ in $\Z^2$. Suppose the top line is a {\em source}, that is, each vertex $(x, 2)$ for $x \ge 0$ has a right-pointing edge to $(x+1, 2)$, and each vertex $(x, 2)$ for $x \le 0$ has a left-pointing edge to $(x-1, 2)$. Reverse the corresponding edges on the line $y = 1$, and at every fourth $x$-value, say, place a downward edge from $(x, 2)$ to  $(x, 1)$. This has no strongly connected infinite component, but adding a single edge from $(0, 1)$ to $(0, 2)$ creates one.
\end{example}

\begin{example}\label{ex:trifpts}{\bf Many splitting points give rise to the same boundary partition:}
Burton and Keane's proof works by considering \emph{``splitting points"}, that is places where when a
single vertex is removed from a configuration, an infinite cluster splits into at least three separate infinite clusters. 
The proof counts splitting points, showing that if they exist, their number grows 
proportionally to the volume of a region
by the ergodic theorem, while showing they grow at most proportionally to the surface area of a region.
For the upper bound they study, in a large volume, which boundary points belong to which infinite component
when a splitting point is removed. Critically, removing different splitting points gives rise to
a different infinite component structures. Unfortunately in the directed case, the removal of many 
splitting points may give rise to the same infinite component structures.

This is illustrated schematically in Figure \ref{fig:BKfail}: when any of the (red) splitting points is removed, 
the only infinite clusters are the bi-directional paths connecting the circle to infinity.
\end{example}

\begin{figure}[h!]
\begin{center}\includegraphics[scale=.23]{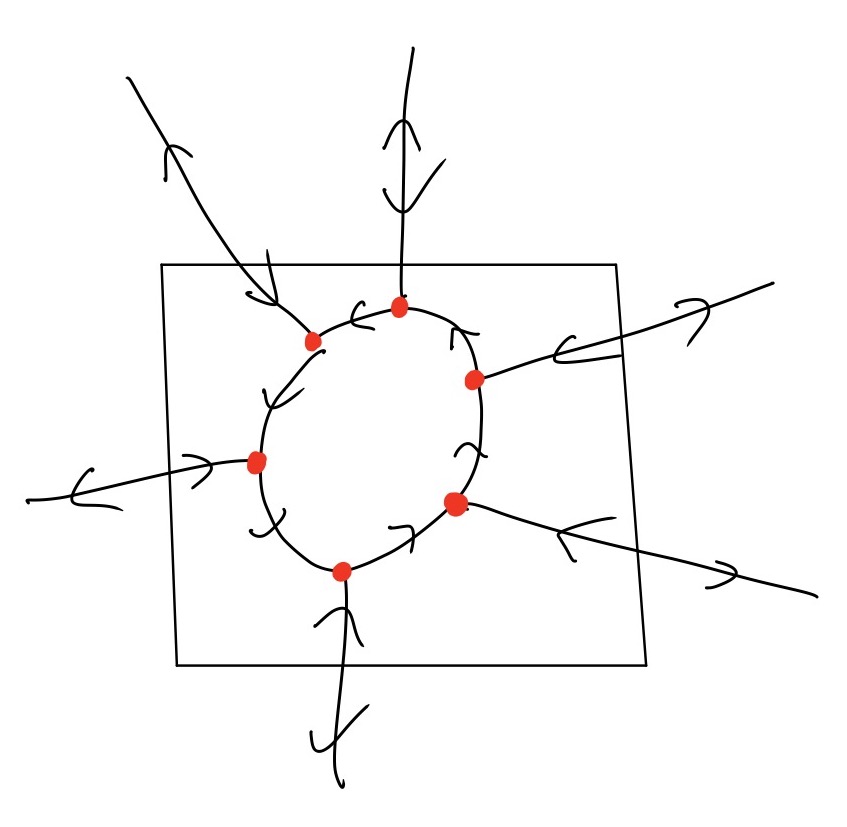}\end{center}
\caption{A failure of Burton-Keane in directed graphs}\label{fig:BKfail}
\end{figure}

Two more questions of interest: \begin{enumerate}
	\item Does positive probability of weak percolation imply positive probability of strong percolation? We conjecture that it does. 
	\item The results prior to Section 5 carry over to higher dimensions.  However, the proof of Theorem \ref{no 2 infinite clusters} depends upon working in two dimensions. Does this result hold for $d \ge 2$? 
\end{enumerate}

{\bf Data Availability Statement:} Data sharing not applicable to this article as no datasets were generated or analysed during the current study.

\bibliographystyle{amsalpha}
\bibliography{percolation}

\newcommand{\etalchar}[1]{$^{#1}$}
\providecommand{\bysame}{\leavevmode\hbox to3em{\hrulefill}\thinspace}
\providecommand{\MR}{\relax\ifhmode\unskip\space\fi MR }
% \MRhref is called by the amsart/book/proc definition of \MR.
\providecommand{\MRhref}[2]{%
  \href{http://www.ams.org/mathscinet-getitem?mr=#1}{#2}
}
\providecommand{\href}[2]{#2}
\begin{thebibliography}{RPK{\etalchar{+}}07}

\bibitem[BK89]{BK}
R.~Burton and M.~Keane, \emph{Density and {U}niqueness in {P}ercolation}, Comm.
  Math. Phys. \textbf{121} (1989), 501--505.

\bibitem[Gri06]{GG2}
Geoffrey Grimmett, \emph{Uniqueness and multiplicity of infinite clusters},
  Dynamics \& stochastics, IMS Lecture Notes Monogr. Ser., vol.~48, Inst. Math.
  Statist., Beachwood, OH, 2006, pp.~24--36.

\bibitem[K{\"{o}}n27]{Kon}
D.~K{\"{o}}nig, \emph{{\"{U}}ber eine {S}chlussweise aus dem {E}ndlichen ins
  {U}ndendliche}, Acta Litterarum ac Scientiarum \textbf{3} (1927), 121--130.

\bibitem[NS81]{NS}
C.M. Newman and L.~S. Schulman, \emph{Infinite clusters in percolation models},
  J. Stat. Phys. \textbf{26} (1981), 613--628.

\bibitem[PT00]{PT}
A.~P\"{o}nitz and P.~Tittmann, \emph{Improved upper bounds for self-avoiding
  walks in {$\mathbb Z^d$}}, Electronic Journal of Combinatorics \textbf{7}
  (2000), no.~Research Paper 21, 10, electronic.

\bibitem[RPK{\etalchar{+}}07]{RPKTR}
J.~A. Riousset, V.~P. Pasko, P.~R. Krehbiel, R.~J. Thomas, and W.~Rison,
  \emph{Three-dimensional fractal modeling of intracloud lightning discharge in
  a {N}ew {M}exico thunderstorm and comparison with lightning mapping
  observations}, J. Geophys. Res. Atmospheres \textbf{112} (2007), [D15203].

\bibitem[Wie95]{WI}
J.~C. Wierman, \emph{Substitution method critical probability bounds for the
  square lattice site percolation model}, Combin. Probab. Comput. \textbf{4}
  (1995), 181--188.

\end{thebibliography}

\Addresses
\end{document}